\newcommand{\lpcritical}{\rho_*}
\newcommand{\crist}{\lpcritical}
\newtheorem{thm}{Theorem}[section]
\newtheorem{lemma}[thm]{Lemma}
\newtheorem{cor}[thm]{Corollary}
\newtheorem{dfn}[thm]{Definition}
\theoremstyle{remark}
\definecolor{cornflowerblue}{rgb}{0.39, 0.58, 0.93}
\definecolor{yaleblue}{rgb}{0.06, 0.3, 0.57}
\definecolor{blizzardblue}{rgb}{0.67, 0.9, 0.93}
\definecolor{unitednationsblue}{rgb}{0.36, 0.57, 0.9}
\definecolor{carolinablue}{rgb}{0.6, 0.73, 0.89}
\definecolor{columbiablue}{rgb}{0.61, 0.87, 1.0}
\definecolor{babyblue}{rgb}{0.54, 0.81, 0.94}
\definecolor{darkcerulean}{rgb}{0.03, 0.27, 0.49}
\definecolor{mountainmeadow}{rgb}{0.19, 0.73, 0.56}
\definecolor{pastelgreen}{rgb}{0.47, 0.87, 0.47}
\definecolor{celadon}{rgb}{0.67, 0.88, 0.69}
\definecolor{teagreen}{rgb}{0.82, 0.94, 0.75}
\definecolor{ashgrey}{rgb}{0.7, 0.75, 0.71}
\definecolor{corn}{rgb}{0.98, 0.93, 0.36}
\definecolor{canaryyellow}{rgb}{1.0, 0.94, 0.0}
\definecolor{richlavender}{rgb}{0.67, 0.38, 0.8}
\definecolor{americanrose}{rgb}{1.0, 0.01, 0.24}
\definecolor{palepink}{rgb}{0.98, 0.85, 0.87}
\newcommand{\bP}{\mathbb P}
\newcommand{\bZ}{\mathbb Z}
\newcommand{\Z}{\mathbb Z}
\newcommand{\fixation}{\bf\texttt{Fixation}}
\newcommand{\Instr}{\mathsf{Instr}}
\newcommand{\Left}{\texttt{Left}}
\newcommand{\Right}{\texttt{Right}}
\newcommand{\Sleep}{\texttt{Sleep}}
\newcommand{\positive}{\text{\bf\texttt{Positive Odometer}}}
\newcommand{\tilt}{\text{\bf\texttt{Tilt}}}
\newcommand{\s}{\mathfrak{s}}
\newcommand{\omegatilde}{\tilde{\omega}}
\newcommand{\omegaprime}{\omega_{\text{initial}}}
\newcommand{\omegadouble}{\omega_{\text{final}}}
\newcommand{\init}{{\omega^\ast}}
\newcommand{\plentiful}{{\bf\texttt{plentiful}}}
\newcommand{\sparse}{{\bf\texttt{sparse}}}
\pgfplotsset{compat=1.15}
\begin{document}
\title{Activated Random Walks on $\bZ$ with Critical Particle Density} 
\author[Madeline~Brown]{ \ Madeline~Brown} 
	\address{Department of Mathematics, University of Washington, Seattle, WA 98195} 
	\email{madbro@uw.edu} 
\author[Christopher~Hoffman]{ \ Christopher~Hoffman} 
	\address{Department of Mathematics, University of Washington, Seattle, WA 98195} 
	\email{hoffman@math.washington.edu} 
\author[Hyojeong~Son]{ \ Hyojeong~Son} 
	\address{Department of Mathematics, University of Washington, Seattle, WA 98195} 
	\email{hjson@math.washington.edu}

\thanks{The authors were partially supported by NSF grant  DMS-1954059. H.S. was also partially supported by a fellowship from the University of Washington Math department.}

\begin{abstract} The Activated Random Walk (ARW) model is a promising candidate for demonstrating self-organized criticality due to its potential for universality. Recent studies have shown that the ARW model exhibits a well-defined critical density in one dimension, supporting its universality. In this paper, we extend these results by demonstrating that the ARW model on $\bZ$, with a single initially active particle and all other particles sleeping, maintains the same critical density. Our findings relax the previous assumption that required all particles to be initially active. This provides further evidence of the ARW model's robustness and universality in depicting self-organized criticality. 

\end{abstract}

\maketitle


\section{Introduction} 

In 1987, Bak, Tang, and Wiesenfeld introduced the concept of \textit{self-organized criticality} (SOC) \cite{BakTangWiesenfeld87}, a property observed in complex natural systems where energy accumulates slowly and is released intermittently. SOC describes systems that naturally evolve to a critical state without the need for fine-tuning parameters or external influences. Examples include financial markets, where variations in stock and commodity prices follow a power-law distribution, and forest fires, where accumulated flammable material can lead to large fires ignited by small sparks. These examples highlight the ubiquity and significance of SOC in various artificial and natural systems.

Since the concept of SOC was proposed, the search for a universal SOC model has led to extensive research on the \textit{deterministic sandpile model}. In this model, each vertex in a graph has a nonnegative number of chips, and a vertex can topple when the number of chips at that vertex equals or exceeds its degree. A toppling vertex distributes one chip to each neighboring vertex, which can trigger a chain of topplings. This deterministic model exhibits intricate fractals and patterns \cite{levine2016laplacian}. However, its deterministic nature limits its ability to exhibit certain critical behaviors \cites{dhar1999abelian,FeyLevineWilson10,JoJeong10}. Ideally, a universal model should exhibit universality in the sense that macroscopic properties are independent of microscopic details, making the system robust to perturbations.

The probabilistic variant of the deterministic sandpile model is the \textit{stochastic sandpile model}. In this model, instead of sending one chip to each neighboring vertex when a site topples, a fixed number of particles are sent to neighboring vertices chosen independently according to some probability distribution. There is evidence that this stochastic model features universality \cite{biham}. However, this model involves pairwise correlations in particle movements, which complicates the analysis of its dynamics.

More recently, the \textit{Activated Random Walk} (ARW) model has emerged as a promising candidate for a universal SOC model. At each site, the initial number of particles is i.i.d.\ and sampled from an ergodic distribution with mean $\rho$. In this continuous-time interacting particle system on $\mathbb{Z}$, each particle is either active or sleeping. Active particles perform symmetric random walks at rate $1$ and fall asleep at rate $\lambda \in (0,\infty)$. Sleeping particle remains stationary until awakened by an active particle and is instantly reactivated if other particles are present at the site. Since particles in the model perform random walks and fall asleep independently, the ARW model is more tractable than the stochastic sandpile model and has yielded significant findings.

One of primary questions in ARW dynamics is whether the system fixates or remains active, and under what conditions. Fixation occurs when every site is visited by active particles only finitely often, while the system stays active if there is ongoing particle movement and interaction. One might conjecture that the critical density, where a phase transition occurs, depends on the specific version of the ARW model being considered.

Recently, the second author, Junge, and Johnson \cite{HoffmanJohnsonJunge24} proved that for each $\lambda > 0$, the critical densities in the driven-dissipative model, the point-source model, the fixed-energy model on $\bZ$, and the fixed-energy model on the cycle all exist and are equal, denoted by $\crist$. This result confirms the \textit{density conjecture}, reinforcing the ARW model as a promising candidate for universality due to its well-defined critical density $\crist$.

Furthermore, the long-term behavior of a system depends on the particle density $\rho$ and the sleep rate $\lambda$. Specifically, Rolla, Sidoravicius, and Zindy demonstrated the following result, which highlights a phase transition based on these parameters:

\begin{thm}[Theorem 1 of {\cite{RollaSidoraviciusZindy19}}]\label{willow}
Consider the ARW model on $\mathbb{Z}^d$ for a fixed $d \geq 1$ with a given sleep rate $\lambda$. There exists a critical density $\crist$ such that for any spatially ergodic distribution $\nu$ supported on configurations where all particles are initially active, the following holds: a configuration sampled from $\nu$ almost surely fixates if the average density $\rho$ is less than $\crist$, and almost surely remains active if $\rho$ is greater than $\crist$.
\end{thm}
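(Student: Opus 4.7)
My plan is to turn the continuous-time dynamics into a deterministic shuffling problem via the \emph{site-wise representation}. At each $x \in \bZ^d$ I place an i.i.d.\ instruction stack $\Instr(x) = (\tau_{x,1}, \tau_{x,2}, \ldots)$ whose entries independently equal each of the $2d$ unit-vector steps with probability $1/(2d(1+\lambda))$ or a ``sleep'' symbol with probability $\lambda/(1+\lambda)$. A \emph{legal toppling} at $x$ consumes the next unused $\tau_{x,k}$ and either moves an active particle from $x$ accordingly or, if $\tau_{x,k}$ is a sleep symbol and $x$ currently holds a single particle, puts that particle to sleep. My first task is to establish the Abelian property: the odometer $m(x) = \#\{\text{topplings performed at } x\}$ when all legal topplings are exhausted depends only on the initial configuration $\eta$ and the stacks, not on the toppling order. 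This reduces fixation versus non-fixation to the deterministic question of whether $m$ is everywhere finite.

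Next I would prove \textbf{monotonicity in} $\eta$: if $\eta \leq \eta'$ pointwise, then $m_\eta \leq m_{\eta'}$ almost surely, by first executing the $\eta$-topplings and then continuing with the extra particles. Combined with the Abelian property, the global fixation event $F = \{m < \infty \text{ at every site}\}$ is a translation-invariant, $\eta$-monotone functional of $(\eta, \Instr)$. Since $\Instr$ is i.i.d.\ across sites and $\nu$ is spatially ergodic, the joint law of $(\eta, \Instr)$ is ergodic under shifts of $\bZ^d$, so $\bP(F) \in \{0, 1\}$. I would then define $\lpcritical$ as the supremum of densities $\rho$ for which $\Bern(\rho)$ initial data fixates almost surely; the monotone coupling of Bernoullis in $\rho$ immediately yields the dichotomy of the theorem when $\nu$ itself is Bernoulli.

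The substantive work is extending the dichotomy to every spatially ergodic $\nu$ of mean $\rho$. For $\rho < \lpcritical$ I would pick $\rho' \in (\rho, \lpcritical)$ and use Birkhoff's theorem on $\nu$ to argue that in each large box $B_n = [-n, n]^d$ the empirical count $\sum_{x \in B_n} \eta(x)$ is close to $\rho |B_n|$, hence stochastically dominated with high probability by an independent $\Bern(\rho')$ count after a bounded rearrangement of particles within $B_n$. Because the Abelian property is insensitive to the arrangement of particles inside a finite set, a box-by-box comparison produces a coupling in which $m_\nu \leq m_{\Bern(\rho')}$ up to boundary corrections, and fixation of the Bernoulli system forces fixation for $\nu$. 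The symmetric argument handles $\rho > \lpcritical$.

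The main obstacle I expect is implementing the last step without introducing long-range correlations that spoil ergodicity; the cleanest route is a hierarchy of disjoint boxes whose sizes tend to infinity, combined with the estimate that a local modification of $\eta$ changes $m$ only by a locally bounded amount, so the boundary contributions are negligible in the limit. Making this ``enhancement'' rigorous — in particular, verifying that the coupling is faithful enough for the $0$--$1$ law of the previous step to transfer from the Bernoulli reference measure to $\nu$ — is where the bulk of the technical effort should lie.
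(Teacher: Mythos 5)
The paper does not prove Theorem~\ref{willow}; it imports it verbatim from Rolla, Sidoravicius, and Zindy, so there is no in-house argument to compare against. Judged on its own merits, your opening steps are the standard, correct preliminaries: the site-wise representation, the Abelian property, monotonicity of the odometer in the \emph{number} of particles, a $0$--$1$ law for the translation-invariant fixation event, and the resulting dichotomy when $\nu$ itself is Bernoulli. The gap lies in the step that extends the dichotomy from $\Bern(\rho)$ to an arbitrary ergodic $\nu$, and it is a real one.

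You assert that ``the Abelian property is insensitive to the arrangement of particles inside a finite set'' and use this to build a coupling $m_\nu \leq m_{\Bern(\rho')}$ after rearranging particles within each box $B_n$. The Abelian property says only that the \emph{order of legal topplings} is irrelevant to the final odometer; it says nothing about the initial arrangement of particles. The only monotonicity available is in the particle count: adding a particle can only increase the odometer, but moving a particle from $x$ to $y$ (even one step) is not monotone and can change the odometer nonlocally. Moreover, an ergodic $\nu$ of density $\rho$ may place many particles on a single site while a Bernoulli configuration is $\{0,1\}$-valued, so pointwise domination is impossible without rearrangement of unbounded range; once you rearrange across all of $B_n$, whose side length grows with $n$, no locality argument applies.

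The other load-bearing claim --- that ``a local modification of $\eta$ changes $m$ only by a locally bounded amount'' --- cannot hold near or above criticality and is in fact contradicted by Theorem~\ref{chipmunk} of this very paper: adding a single active particle at the origin to an all-sleeping supercritical Bernoulli configuration changes the odometer from identically zero to everywhere infinite with positive probability. Any scheme resting on local modifications having locally bounded effect therefore fails precisely in the regime it is meant to treat. The Rolla--Sidoravicius--Zindy proof compares odometers directly through a careful coupling and resampling construction rather than by pointwise particle domination after rearrangement; reproducing that comparison is the actual content of the theorem and is not supplied by your outline.
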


This theorem allows us to choose any shift-invariant ergodic measure for the initial configuration. However, there is an assumption that all particles should initially be active. In this paper, we weaken this assumption to show that even with only one active particle, there is a positive probability that the system remains active.

A very similar result was recently proved by Forien \cite{nicolas2024macroscopic}. He studied activated random walk on an interval where particles can leave through the endpoints. He showed that in the supercritical regime, a positive density of particles leaves the interval with positive probability. This paper significantly strengthens his result.

We consider a starting configuration $\omega \in \{ 0,\s\}^{\mathbb{Z}}$. In this configuration,  $\omega(i) = 0$ denotes that site $i$ is empty, and $\omega(i) = \s$ indicates that site $i$ contains a sleeping particle. The number of particles at each site is an independent Bernoulli random variable with mean $\rho \in (0,1)$, referred to as the particle density. 

Given a configuration $\omega \in \{0, \s\}^{\mathbb{Z}}$ we define a new configuration $\init$ by setting $\init(i) = \omega(i)$ for all $i \neq 0$ and placing one active particle at the origin, that is, $\init(0) = 1$. For any configuration $\nu \in \{ 0,\s, 1,2,\dots\}^{\mathbb{Z}}$, let $\mathbb{P}_{\nu}$ represent the probability measure of the system's evolution starting from $\nu$.

We are now in position to describe our main theorem.
Let $\fixation$ be the event that the system eventually fixates. Our goal is to prove the following theorem:

\begin{thm}\label{chipmunk}
Let $\omega \in \{ 0,\s\}^{\mathbb{Z}}$ be a configuration with particle density $\rho > \crist$, where all particles are initially asleep and the $\omega(i)$ are i.i.d. Bernoulli random variables with expectation $\rho$. For almost every $\init$,
$$0 < \mathbb{P}_{\init}(\text{\fixation}) < 1.$$
\end{thm}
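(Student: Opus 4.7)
The statement combines two bounds of very different difficulty, and I would treat them separately.

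For $\mathbb{P}_{\init}(\text{\fixation}) > 0$, I would exhibit a positive-probability event on which the unique active particle permanently sleeps in an empty patch. Fix a large $L$ and consider the event that $\omega(1) = \cdots = \omega(L) = 0$, which has probability $(1 - \rho)^L > 0$ since $\rho < 1$, intersected with the event that in the instruction-stack representation the particle at the origin executes \Right, \Right, \ldots, \Right, \Sleep\ in its first $L+1$ actions, which also has positive probability thanks to the positive sleep rate $\lambda$. On this event the particle ends up sleeping at site $L$ and no site has more than one particle, so the configuration is already fixated and \fixation\ occurs.

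For $\mathbb{P}_{\init}(\text{\fixation}) < 1$, the substantive content, I would work on a finite window $[-N, N]$ with absorbing boundary and transfer an all-active finite-volume estimate. For the all-active Bernoulli initial condition at density $\rho > \crist$, Theorem \ref{willow} together with the finite-volume estimates of Forien and of Hoffman--Johnson--Junge yields a uniform-in-$N$ lower bound of the form $\mathbb{P}(\text{at least } cN \text{ particles exit } [-N, N]) \geq p > 0$. My goal is to convert this into an analogous lower bound when the initial configuration is $\init$ restricted to the window.

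The plan is a two-phase Abelian argument. In Phase~I one topples only at currently active sites, starting from the unique active particle at the origin. This particle performs a walk through the Bernoulli sleeping background, and every sleeping particle it meets becomes active and enlarges the activated region. The technical heart is to show that with probability bounded below uniformly in $N$, the activated region grows until it covers a sub-interval $I \subset [-N, N]$ of length $\Theta(N)$ in which essentially every sleeping particle has been woken. Phase~II then invokes the finite-volume all-active estimate on $I$: by the monotonicity of the ARW odometer under the addition of particles, the end-of-Phase~I configuration dominates the all-active density-$\rho$ configuration supported on $I$, so conditional on Phase~I succeeding, at least $cN$ particles exit $[-N, N]$ with probability at least $p$. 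Combining the two lower bounds and sending $N \to \infty$, using monotonicity in the window and the fact that non-fixation on $\mathbb{Z}$ is implied by unboundedness of the odometer at the origin, yields $\mathbb{P}_{\init}(\text{\fixation}) < 1$.

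I expect Phase~I to be the main obstacle. The single active particle is exploring a random environment that it is simultaneously modifying: sleeping particles it awakens can, after merging and subsequent splits, relapse into the sleeping state, and when $\rho$ is only barely above $\crist$ one cannot afford more than a small fraction of such losses. Controlling the growth of the activated cluster uniformly in $N$---likely via a renormalization or induction-on-scales argument that exploits monotonicity of the odometer with respect to sleep instructions and Abelian rearrangement of topplings---is where I expect the bulk of the technical work to lie.
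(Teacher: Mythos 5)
Your lower bound argument is correct but more elaborate than it needs to be: the paper simply notes that with probability $\lambda/(1+\lambda)$ the lone active particle executes a sleep instruction before its first move, which immediately fixates the system. Your version, requiring an empty patch of length $L$ to the right of the origin together with a prescribed string of $\Right$'s and then $\Sleep$, also works, but it defends against a complication (colliding with a sleeper) that the one-step argument makes irrelevant.

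The upper bound is where the genuine gap lies. Your high-level plan---finite window, Abelian rearrangement, grow an activated region to macroscopic size, then dominate an all-active density-$\rho$ configuration and invoke the Forien/Hoffman--Johnson--Junge finite-volume estimate---is a reasonable sketch, and you correctly flag Phase~I as the crux. But the proposal does not supply that step, and the obstruction you name (woken particles relapsing into sleep while the cluster is still spreading, unaffordable when $\rho$ is barely above $\crist$) is precisely what makes the ``reach an essentially all-active macroscopic interval'' goal unreachable by a direct argument; there is no moment at which the configuration stochastically dominates an all-active one. The paper avoids that goal altogether. It runs an induction over nested scales $I_n = [-k^n, k^n]$, maintaining at each scale a weaker notion of a \emph{good} configuration (a lower bound on the total particle count in $I_n$, an upper bound $\crist+\delta$ on the density in every subinterval of length $\gamma k^n$) together with the event that the odometer is strictly positive throughout $I_{n+1}$ after the $(n+1)$st partial stabilization. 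The key technical device you are missing is the center-of-mass, or ``tilt,'' argument (Lemma~\ref{united}): if the odometer vanished at some $z$ with $|z|\le k^n$, one compares the first moment of the particle configuration to the right of $z$ before and after stabilization. The initial annulus is \plentiful\ (density near $\rho$) while the stabilized positive-odometer region is \sparse\ (density near $\crist<\rho$), and reconciling these forces a center-of-mass shift of order $k^{2(n+1)}$; but the center of mass is a bounded-increment martingale over at most $k^{3.5(n+1)}$ topplings, so Azuma--Hoeffding makes that shift exponentially unlikely. Borel--Cantelli over $n$ then yields positive probability that the odometer is positive everywhere at every scale, hence non-fixation. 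Without some replacement for this contradiction---or another mechanism ruling out a zero odometer inside the growing interval---your Phase~I cannot be closed, and the proposal remains an outline rather than a proof.
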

The lower bound is easy while the upper bound is significantly more involved.

\begin{proof}[Proof of Lower Bound] At the beginning of our process, there is only one active particle at the origin. If this particle falls asleep before moving, then there are no active particles available to awaken any remaining sleeping particles. As a result, the system achieves \text{\fixation} immediately.

An active particle either jumps to a neighboring site at rate $1$ or falls asleep at rate $\lambda$. The total rate of these actions is $1 + \lambda$. Therefore, the probability that the system fixates is at least $\frac{\lambda}{1 + \lambda}$. \end{proof}

Theorem \ref{chipmunk} shows that the critical value is quite sharp. Below the critical value a configuration least likely to fixate (one with all particles initially active) still fixates. Above the critical value a configuration most likely to fixate (one with all but one particle initially asleep) still has a positive probability of not fixating.

\section{Site-wise Construction of ARW} \label{Section: Site-wise Construction of ARW}

In our proof of the upper bound, we utilize a site-wise representation for the ARW, where infinite stacks of instructions are attached to each site. This construction, underpinned by the abelian Property (Lemma \ref{camellia}), allows us to focus on the moves taken by particles at specific sites without concerning ourselves with the order in which they occur.

Consider the ordered set $\mathbb{N} \cup \{\s\}$ with $0 < \s < 1 < 2 < \cdots$. A \textit{configuration} is an element $\omega \in \{0, \s, 1, 2, 3, \dots\}^{\bZ}$, where $\omega(x)$ gives the state of site $x \in \bZ$. If $\omega(x) = 0$, there is no particle at site $x$; if $\omega(x) = \s$, there is one sleeping particle; and if $\omega(x) = n \geq 1$, there are $n$ active particles.

When an active particle visits a site with a sleeping particle, it wakes up the sleeping particle, resulting in $\s + 1 = 2$. If a site is given a sleep instruction, the effect depends on the number of active particles at the site. Precisely, if there is exactly one active particle at the site, the particle becomes sleeping, so the state changes from $1$ to $\s$. If there are $n \geq 2$ active particles at the site, the sleep instruction has no effect, and the state remains at $n$. We define a site $x$ to be \textit{stable} if $\omega(x) < 1$ (i.e., if there are no active particles at $x$), and \textit{unstable} otherwise. By defining $\lvert \s \rvert = 1$, we denote the number of particles at site $x$ by $\lvert \omega(x) \rvert$.

Each site $x \in \bZ$ has an infinite \textit{stack} of instructions $\Instr_x(k)$, where $k$ denotes the $k$-th instruction at site $x$. The distribution of $\Instr_x(k)$ is:
\begin{align*}
  \Instr_x(k) = \begin{cases} 
    \Left & \text{with probability $\frac{1}{2(1+\lambda)}$,} \\
    \Right & \text{with probability $\frac{1}{2(1+\lambda)}$,} \\
    \Sleep & \text{with probability $\frac{\lambda}{1+\lambda}$}.
  \end{cases}
\end{align*}
Each $\Instr_x(k)$ is independent for all $x \in \bZ$ and $k \in \mathbb{N}^{+}$. And the rate at which we execute unused instructions at each site $x$ is given by $\mathds{1}_{\omega(x) \neq \s} \lvert \omega(x) \rvert (1+\lambda)$. The $\Left$ (respectively, $\Right$) instruction subtracts $1$ from $\omega(x)$ and adds $1$ to $\omega(x-1)$ (respectively, $\omega(x+1)$). Executing a $\Sleep$ instruction at a site with exactly one active particle changes $\omega(x) = 1$ to $\s$; if there are multiple active particles at the site, the sleep instruction has no effect. We can \textit{topple} a single site $x$ by executing its first unused instruction. Alternatively, we can \textit{topple} a sequence of sites $\alpha = (x_1, x_2, \ldots, x_l)$ by starting with $x_1$ and proceeding to $x_l$.

To track the number of topplings at each site, we introduce an \textit{odometer} function $U$: 
$$U: \mathbb{Z} \to \mathbb{N},$$ 
where $U(x)$ denotes the number of times site $x$ has been toppled. Additionally, $U_\alpha(x)$ represents the number of times site $x$ appears in the sequence $\alpha$. When toppling site $x$, we update the configuration $\omega$ and the odometer reading $U$ by defining the toppling operation acting on $(\omega, u)$ as: 
$$\Phi_x(\omega, U) = (\Instr_x(U(x) + 1)(\omega), U + \delta_x),$$
where $\delta_x$ is a function on $\mathbb{Z}$ that takes the value $1$ at $x$ and $0$ elsewhere. We say that $\Phi_x$ is \textit{legal} if site $x$ is unstable. A toppling sequence $\Phi_\alpha$ is a \textit{sequence of legal topplings} if, for each $1 \leq i \leq l$, the toppling $\Phi_{x_i}$ is legal after performing the preceding topplings $\Phi_{x_{i-1}} \cdots \Phi_{x_2} \Phi_{x_1}$. We call $\alpha$ \textit{stabilizing} if, after performing all topplings in the sequence $\alpha$, no unstable sites remain.

The abelian property is an essential feature of the ARW, enabling us to utilize the toppling procedure described in Section~\ref{section_good}, where we stabilize the system one interval at a time. We formalize this property in the following lemma:

\begin{lemma}[Abelian Property {\cite{rolla2020activated}}]\label{camellia}
   Let $\alpha$ and $\beta$ be two legal sequences of topplings of a configuration $\omega$. Suppose $U_\alpha(x) = U_\beta(x)$ for all $x \in \mathbb{Z}$. Then their final configurations are identical, that is,
   $$\Phi_\alpha(\omega) = \Phi_\beta(\omega).$$
\end{lemma}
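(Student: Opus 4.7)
The plan is to prove Lemma \ref{camellia} by induction on the common total number of topplings $n := \sum_{x \in \bZ} U_\alpha(x) = \sum_{x \in \bZ} U_\beta(x)$, reducing the general case to a local commutation statement for single-site topplings at distinct sites. The base case $n = 0$ is trivial. For the inductive step I peel off the first toppling of $\alpha$, show it can be shuffled to the front of $\beta$ without changing the final configuration, and then apply the induction hypothesis to the remaining $n - 1$ topplings.

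The key technical ingredient is the following local commutation lemma: if $x \neq y$ and both $\Phi_x$ and $\Phi_y$ are legal for some configuration $\omega$, then $\Phi_y$ is legal for $\Phi_x(\omega)$, $\Phi_x$ is legal for $\Phi_y(\omega)$, and $\Phi_x\Phi_y(\omega) = \Phi_y\Phi_x(\omega)$. The legality half is routine: an instruction at $x$ only changes the state at sites in $\{x-1, x, x+1\}$, and at the two neighbors it can only add an active particle; hence the active-particle count at $y \neq x$ cannot decrease, so the instability of $y$ is preserved. The equality half is a case check. For $|x - y| \geq 2$ the two instructions act on disjoint sites and commute trivially, so we may assume $|x - y| = 1$. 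One then runs through the nine combinations of instructions in $\{\Left, \Right, \Sleep\}$; the only delicate cases arise when one instruction is $\Sleep$ and the other deposits a particle at the sleeping site, but the rule $\s + 1 = 2$ together with the no-op behavior of $\Sleep$ on sites with at least two active particles conspire to yield identical final configurations in either order.

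With the commutation lemma in hand, the inductive step is clean. Let $x_1$ be the first site toppled in $\alpha$. Since $U_\beta(x_1) = U_\alpha(x_1) \geq 1$, the site $x_1$ also appears in $\beta$; let $k$ be the smallest index with $\beta_k = x_1$. The site $x_1$ is unstable in $\omega$, and because none of $\beta_1, \ldots, \beta_{k-1}$ equals $x_1$ and topplings at sites other than $x_1$ can only add active particles to $x_1$, it follows that $x_1$ remains unstable at every intermediate configuration produced by the prefix $\beta_1, \ldots, \beta_j$ for $j < k$. Applying the commutation lemma $k - 1$ times, I shuffle $\beta_k$ leftward past $\beta_{k-1}, \ldots, \beta_1$ one pair at a time; each swap preserves both legality and the final configuration, producing a legal sequence $\beta' = (x_1, \beta_1, \ldots, \beta_{k-1}, \beta_{k+1}, \ldots, \beta_n)$ with $\Phi_{\beta'}(\omega) = \Phi_\beta(\omega)$. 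Now $\alpha$ and $\beta'$ share their first toppling $\Phi_{x_1}$; the two tails are legal toppling sequences of $\Phi_{x_1}(\omega)$ with common odometer $U_\alpha - \delta_{x_1}$ and common length $n - 1$, so the induction hypothesis gives $\Phi_\alpha(\omega) = \Phi_{\beta'}(\omega) = \Phi_\beta(\omega)$. The main obstacle is precisely the case analysis in the commutation lemma: the asymmetric behavior of $\Sleep$ (which fires on one active particle, is a no-op on $\geq 2$, and interacts nontrivially with the value $\s$) prevents a single uniform algebraic argument. Once that is dispatched, the rest is bookkeeping.
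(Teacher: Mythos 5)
The paper does not prove Lemma \ref{camellia}; it cites it directly from Rolla's survey \cite{rolla2020activated}. So there is no in-paper proof to compare against, and I will assess your argument on its own terms. Your overall strategy --- induction on the total number of topplings combined with a local commutation lemma for single-site topplings at distinct sites --- is the standard and correct way to prove the abelian property, and the skeleton of your argument is sound. Two small technical slips are worth flagging.

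First, the claim that for $\lvert x-y\rvert\ge 2$ ``the two instructions act on disjoint sites'' is not true when $\lvert x-y\rvert=2$: an instruction at $x$ may deposit a particle at $x+1$ and an instruction at $x+2$ may deposit a particle at $x+1$ as well, so the supports of the two updates share the middle site. The commutation still holds trivially there, because the only action either instruction can perform at that shared site is a deposit, and deposits commute (including across $\s+1=2$, since a $\Sleep$ instruction can only put its own site to sleep and hence cannot create a sleeping state at the middle site), but your stated reason is inaccurate and should be corrected.

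Second, your induction as phrased applies the hypothesis to ``legal toppling sequences of $\Phi_{x_1}(\omega)$,'' but in the site-wise construction the topplings act on pairs $(\omega,U)$, and after peeling off $\Phi_{x_1}$ the remaining process starts from the nonzero odometer $\delta_{x_1}$. To make the induction hypothesis literally applicable you should strengthen it to: for every configuration $\omega$ and every initial odometer $V$, any two legal sequences from $(\omega,V)$ with equal toppling counts produce the same final state. With that strengthening (and the induction still on $\sum_x U_\alpha(x)$, which equals the common length of $\alpha$ and $\beta$) everything goes through exactly as you describe. Both points are easy to repair and do not affect the substance of the argument; the commutation case check for $\lvert x-y\rvert=1$, the preservation of instability of $x_1$ along the prefix of $\beta$, and the swap-to-the-front argument are all correct.
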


We will also make use of a particle-wise representation of ARW. In this representation, each particle has its own stack of i.i.d.\ instructions. The stacks for each particle are also independent. By the abelian property, at any time we can choose to move any active particle. We do this by executing the next unused instruction for that active particle. If this particle-wise process terminates for a given initial configuration, then it will have the same final distribution on the location of particles and odometers as the site-wise representation has.


\section{Good Configurations}
\label{section_good}

\subsection{Toppling Procedure}
By the abelian property (Lemma \ref{camellia}), the stabilization of the system is independent of the order in which sites are toppled. Therefore, we can perform the following inductive partial stabilization procedure, starting from the configuration $\init$ with one active particle at the origin:

\begin{enumerate}
    \item Given $\rho$, choose $\delta<(\rho-\crist)/3$ and a sufficiently large integer  $k>200/\delta^2$. 
    \item For each positive integer $n$,  consider the interval $I_n = [ -k^n, k^n]$.
    \item For each $n$, proceed inductively as follows:
    \begin{enumerate}
        \item Evolve the system within the interval $I_n$, freezing active particles at the boundaries of $I_n$, until no active particles remain in the interior of $I_n$.
        \item If active particles are present at the boundaries of $I_n$, extend the interval to $I_{n+1}$ and repeat the process.
    \end{enumerate}
\end{enumerate}

Throughout the remainder of this paper, when we refer to the odometer, we are considering the odometer associated with the stabilization step on the interval $I_{n+1}$. Specifically, the stabilization begins with the configuration $\omegaprime$ and concludes with the configuration $\omegadouble$ on $I_{n+1}$. Also, the $n$th step stabilization refers to our toppling procedure for stabilizing the interval $I_n$. Let $\positive$ denote the subset of $(-k^{n+1}, k^{n+1})$ where the odometer is positive during this stabilization step.

\begin{figure}[h]
    \centering
\begin{tikzpicture}[scale=1.9]


    \def\numTicks{53}

    \def\xStart{-3.3}
    \def\xEnd{3.3}
    \def\xStep{(\xEnd - \xStart)/(\numTicks - 1)}

	\draw[<-,>=latex, line width=0.5pt] ({\xStart },0) -- ({\xStart + \xStep*7},0);
   \draw[->,>=latex,line width=0.5pt] ({\xEnd - \xStep*7},0) -- ({\xEnd},0);

\draw[line width=0.5pt] ({\xStart + \xStep*9},0) -- ({\xStart + \xStep*25},0);

\draw[line width=0.5pt] ({\xEnd - \xStep*25},0) -- ({\xEnd - \xStep*9},0);

  \node at ({\xStart + \xStep*8}, -0.01) {\scalebox{1}{$\mathbf{\cdots}$}};

    \node at ({\xStart + \xStep*26}, -0.01) {\scalebox{1}{$\mathbf{\cdots}$}};
  
   \node at ({\xEnd - \xStep*8}, -0.01) {\scalebox{1}{$\mathbf{\cdots}$}};

\foreach \i in {2,...,50} {
    \ifnum \i=8
        \else\ifnum \i=26 
            \else\ifnum \i=44
                \else
                    \pgfmathsetmacro{\x}{\xStart + \xStep*\i}
                    \draw[thick, line width=0.5pt] (\x,0.04) -- (\x,-0.04);
            \fi
        \fi
    \fi
}

    \foreach \i in {2,3,5,10,12,14,15,17,20,23,28,31,33,35,36,39,40,41,48} {
        \pgfmathsetmacro{\x}{\xStart + \xStep*\i}
        \fill[blue] (\x,0.1) circle (0.7pt);
    }

    \foreach \y in {0.1,0.4,0.47,0.54,0.61,0.68} {
        \pgfmathsetmacro{\x}{\xStart + \xStep*26-\xStep*4}
        \fill[red] (\x,\y) circle (0.7pt);
    }
    \node at ({\xStart + \xStep*26-\xStep*4}, 0.30) {\scalebox{1.0}{$\mathbf{\vdots}$}}; 
  
       \foreach \y in {0.1,0.4,0.47,0.54} {
        \pgfmathsetmacro{\x}{\xStart + \xStep*26+\xStep*4}
        \fill[red] (\x,\y) circle (0.7pt);
    }
    \node at ({\xStart + \xStep*26+\xStep*4}, 0.30) {\scalebox{1.0}{$\mathbf{\vdots}$}};

    \pgfmathsetmacro{\xstart}{\xStart + \xStep*26-\xStep*4}
    \pgfmathsetmacro{\xend}{\xStart + \xStep*26+\xStep*4}
    \draw [decorate,decoration={brace,mirror,amplitude=8pt},yshift=-0.07cm]
        (\xstart,0) -- (\xend,0) node [black,midway,yshift=-0.53cm] {\footnotesize\(I_n\)};

    \begin{scope}[shift={(0,-1.2)}] 

    \def\numTicks{53}

    \def\xStart{-3.3}
    \def\xEnd{3.3}
    \def\xStep{(\xEnd - \xStart)/(\numTicks - 1)}

	\draw[<-,>=latex, line width=0.5pt] ({\xStart },0) -- ({\xStart + \xStep*7},0);
   \draw[->,>=latex,line width=0.5pt] ({\xEnd - \xStep*7},0) -- ({\xEnd},0);

\draw[line width=0.5pt] ({\xStart + \xStep*9},0) -- ({\xStart + \xStep*25},0);

\draw[line width=0.5pt] ({\xEnd - \xStep*25},0) -- ({\xEnd - \xStep*9},0);

  \node at ({\xStart + \xStep*8}, -0.01) {\scalebox{1}{$\mathbf{\cdots}$}};

    \node at ({\xStart + \xStep*26}, -0.01) {\scalebox{1}{$\mathbf{\cdots}$}};
  
   \node at ({\xEnd - \xStep*8}, -0.01) {\scalebox{1}{$\mathbf{\cdots}$}};

\foreach \i in {2,...,50} {
    \ifnum \i=8
        \else\ifnum \i=26 
            \else\ifnum \i=44
                \else
                    \pgfmathsetmacro{\x}{\xStart + \xStep*\i}
                    \draw[thick, line width=0.5pt] (\x,0.04) -- (\x,-0.04);
            \fi
        \fi
    \fi
}

    \foreach \i in {2,3,5,10,12,20,27,29,36,41,48} {
        \pgfmathsetmacro{\x}{\xStart + \xStep*\i}
        \fill[blue] (\x,0.1) circle (0.7pt);
    }

    \foreach \y in {0.1,0.4,0.47,0.54,0.61,0.68,0.98} {
        \pgfmathsetmacro{\x}{\xStart + \xStep*26-\xStep*23}
        \fill[red] (\x,\y) circle (0.7pt);
    }
    \node at ({\xStart + \xStep*26-\xStep*23}, 0.30) {\scalebox{1.0}{$\mathbf{\vdots}$}}; 

      \node at ({\xStart + \xStep*26-\xStep*23}, 0.88) {\scalebox{1.0}{$\mathbf{\vdots}$}}; 
  
       \foreach \y in {0.1,0.4,0.47,0.54,0.84,0.91,0.98 } {
        \pgfmathsetmacro{\x}{\xStart +  \xStep*26+\xStep*23}
        \fill[red] (\x,\y) circle (0.7pt);
    }
    \node at ({\xStart +  \xStep*26+\xStep*23}, 0.30) {\scalebox{1.0}{$\mathbf{\vdots}$}};
     \node at ({\xStart +  \xStep*26+\xStep*23}, 0.74) {\scalebox{1.0}{$\mathbf{\vdots}$}};

    \pgfmathsetmacro{\xstart}{\xStart + \xStep*26-\xStep*23
}
    \pgfmathsetmacro{\xend}{\xStart + \xStep*26+\xStep*23}
    \draw [decorate,decoration={brace,mirror,amplitude=8pt},yshift=-0.07cm]
        (\xstart,0) -- (\xend,0) node [black,midway,yshift=-0.53cm] {\footnotesize\(I_{n+1}\)};
    \end{scope}

\end{tikzpicture}
\caption{Example of the configurations $\omegaprime$ (top) and $\omegadouble$ (bottom) during the stabilization of the interval $I_{n+1}$ according to the toppling procedure. Red circles represent active particles, while blue circles represent sleeping particles.}
\end{figure}
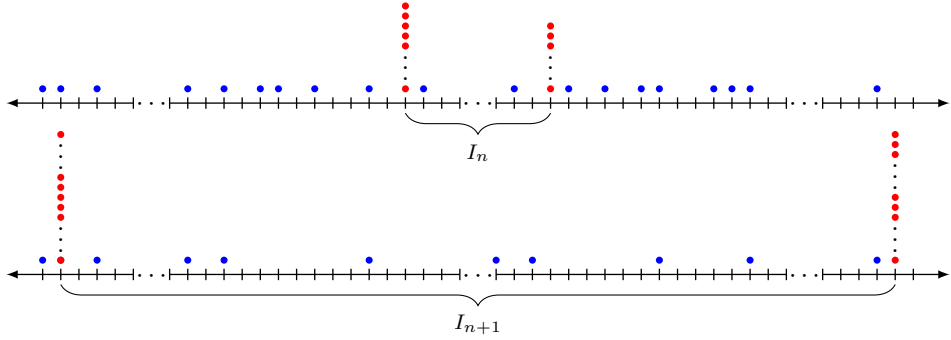

\subsection{Good Configurations}


Our goal is to show that, for a `typical' configuration in our stabilization process, the probability that the system stabilizes without active particles at the boundaries of $I_{n+1}$ is very small, given that it satisfied the previous stabilization step within the interval $I_n$. However, there exists a class of configurations in which no active particles remain at the boundaries after stabilization. To address this, we introduce the concept of a \textit{good} configuration. Let $\gamma < \delta/10$ be chosen such that both $k\gamma$ and $1/\gamma$ are integers.


\begin{dfn}
    A configuration $\omega$ is \textit{good} for a positive integer $n$ if $\omega$ satisfies the following criteria for the $n$th stabilization step:
    \begin{enumerate}[label=Criterion \arabic*, leftmargin=6.0em, labelwidth=3.8em, labelsep=0.5em, itemsep=1em]
        \item\label{c1} The interval $I_n$ contains at least $2k^n(\crist + 2\delta)$ particles.
        \item\label{c2} Every subinterval of length $\gamma k^n$ within $I_n\setminus\{-k^{n},k^n\}$ contains at most $\gamma k^n (\crist + \delta)$ particles.
    \end{enumerate}
\end{dfn}

\subsection{Prior Results}

To estimate the probability that the system fails to satisfy the criteria for a good configuration at step $n+1$, given that it satisfies the criteria at step $n$, we use the following recent theorem.

\begin{thm}[Theorem 8.4 of \cite{HoffmanJohnsonJunge24}]\label{peanut butter}
Let $n$ be a positive integer. Consider activated random walk on $\bZ$ with sleep rate $\lambda > 0$. Let $\sigma$ be an initial configuration with no sleeping particles on the interval $[a, a + n - 1]$ for some integer $a$. Let $X_n$ be the number of particles left sleeping in the stabilization of $\sigma$ on $[a, a + n - 1] $. For any $\delta > 0$,
$$
\bP_{\sigma}[X_n \geq (\crist+\delta) n] \leq C e^{-c n},
$$
where $C$ and $c$ are positive constants depending only on $\lambda$ and $\delta$.
\end{thm}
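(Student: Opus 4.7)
The plan is to exploit the driven-dissipative characterization of $\crist$ from \cite{HoffmanJohnsonJunge24} together with an exponential concentration estimate.

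By the abelian property, stabilizing $\sigma$ on $[a,a+n-1]$ in a single shot produces the same $X_n$ as running the driven-dissipative model on this interval that inserts the $M$ initial active particles of $\sigma$ one at a time, stabilizing between insertions (here $M$ denotes the total active mass of $\sigma$ on $[a,a+n-1]$). If $M < (\crist+\delta)n$, then $X_n \le M$ and the conclusion is immediate, so we may reduce to the case $M \ge (\crist+\delta)n$.

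Next, I would track the process $S_t$ giving the number of sleeping particles in the interval after $t$ insertions. Using the driven-dissipative critical density equality from \cite{HoffmanJohnsonJunge24}, whenever $S_t \ge (\crist+\delta)n$ the interval is supercritical, so each subsequent insertion should have a probability bounded below (uniformly in $n$) of triggering a toppling cascade that ejects at least one particle through the boundary. Iterating this cascade argument over $\Theta(n)$ insertions produces a negative drift for $S_t$ above the threshold $(\crist+\delta)n$, and Azuma- or Chernoff-type concentration on this drift should yield the required exponential tail $\bP(X_n \ge (\crist+\delta)n) \le Ce^{-cn}$.

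The main obstacle is quantifying the cascade step: one must bound below, uniformly in $n$ and in the current ambient configuration, the probability that a single newly inserted active particle triggers a cascade reaching a boundary of $[a,a+n-1]$ when the local sleeping density is above $\crist+\delta$. This reduces to a one-dimensional a priori estimate on the odometer of a supercritical configuration, which can be extracted from the monotonicity of ARW together with the definition of $\crist$ and a careful application of the abelian property to chain small cascades. Combining this lower bound on the \emph{exit probability per inserted particle} with Chernoff estimates on the i.i.d.\ instruction stacks is what promotes the density conjecture from a qualitative statement into the exponential form stated in the theorem.
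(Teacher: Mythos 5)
This theorem is not proved in the present paper; it is quoted verbatim from \cite{HoffmanJohnsonJunge24}, so there is no internal proof to compare against. Judged on its own terms, your sketch has a genuine gap at exactly the point you yourself flag as ``the main obstacle.'' The claim that, whenever the sleeping density on $[a,a+n-1]$ exceeds $\crist+\delta$, a newly inserted active particle triggers a cascade that ejects a particle through the boundary with probability bounded below uniformly in $n$ and in the ambient configuration, is not a consequence of the definition of $\crist$ together with monotonicity. The definition of $\crist$ is a qualitative statement about fixation versus activity for the infinite system; it says nothing quantitative about exit probabilities of finite stabilizations. Converting the qualitative supercriticality into a uniform per-insertion exit probability on a window of length $n$ is precisely the analytic content of Theorem~8.4, and asserting it as extractable ``from the monotonicity of ARW together with the definition of $\crist$'' restates the goal rather than proving it.

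There is also a quieter problem in the drift argument. Inserting one active particle into a supercritical window and stabilizing does not obviously decrease the number of sleepers $S_t$: the ejected particle could be (a relabeling of) the one you just inserted, in which case $S_t$ is unchanged, and if nothing is ejected $S_t$ typically increases by one. To get a genuine negative drift above the threshold you need to show that, with uniformly positive probability, the cascade ejects \emph{strictly more} mass than was inserted, or that it ejects particles often enough to dominate the $+1$ increments; neither is argued. Without that, the Azuma/Chernoff step has nothing to concentrate. The actual argument in \cite{HoffmanJohnsonJunge24} does not proceed by a per-insertion exit probability; it develops quantitative odometer and density controls (culminating in the estimate the present paper cites as equation (81)) that bypass the need for the uniform single-insertion cascade bound your sketch leans on.
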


In particular, we will use the following variant of the theorem.

\begin{cor}\label{daffodil} 
Let $n$ be a positive integer. Consider activated random walk on $\mathbb{Z}$ with sleep rate $\lambda > 0$. Let $\sigma$ be an initial configuration with at least one active particle on the interval $[a, a + n - 1]$ for some integer $a$.

Let $b$ be an integer and $\gamma \in (0,1]$ such that $I_{b,\gamma} = [b, b +  n\gamma - 1]\cap \bZ \subseteq I$. Let $U$ be a stable odometer on $I_{b,\gamma}$ that satisfies $U(b) = u_0$ and $U(x) > 0$ for every site $x \in I_{b,\gamma}$ containing a particle in $\sigma$. Define $Y_{I_{b,\gamma}}(u_0)$ to be the number of sleeping particles remaining in $I_{b,\gamma}$ after applying $U$. Then, for any $\rho > \crist(\lambda)$,
$$\mathbb{P}_\sigma\left[ Y_{I_{b,\gamma}}(u_0) \geq \rho  n\gamma  \right] \leq C e^{-c n}$$
for some constants $C$ and $c$, depending only on $\lambda$ and $\rho$.

\end{cor}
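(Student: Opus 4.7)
The plan is to reduce Corollary~\ref{daffodil} to Theorem~\ref{peanut butter} by activating all sleeping particles on $I_{b,\gamma}$. Let $\sigma'$ denote the resulting configuration: $\sigma'$ agrees with $\sigma$ off $I_{b,\gamma}$ and has only active particles on $I_{b,\gamma}$. Since $\sigma'$ has no sleeping particles on the interval $I_{b,\gamma}$ of length $n\gamma$, Theorem~\ref{peanut butter} applied with $\delta' = (\rho - \crist)/2 > 0$ gives
$$\mathbb{P}_{\sigma'}\bigl[Y' \geq (\crist + \delta')\, n\gamma \bigr] \leq C_0\, e^{-c_0 n \gamma},$$
where $Y'$ is the number of sleeping particles remaining in $I_{b,\gamma}$ after the frozen-boundary stabilization of $\sigma'$, realized by the minimal stable odometer $U'$.

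The next step is to compare $Y$ with $Y'$. Working in the site-wise construction, conservation of particle count within $I_{b,\gamma}$ implies that after executing any odometer $V$ supported in $I_{b,\gamma}$, the total particle count remaining in $I_{b,\gamma}$ equals $M - L_b(V) - R_{b+n\gamma-1}(V)$, where $M$ is the common initial particle count, $L_b(V)$ counts the $\Left$ instructions among $\Instr_b(1), \ldots, \Instr_b(V(b))$, and $R_{b+n\gamma-1}(V)$ counts the $\Right$ instructions among $\Instr_{b+n\gamma-1}(1),\ldots,\Instr_{b+n\gamma-1}(V(b+n\gamma-1))$. Stability forces the remaining particles to be sleeping, so $Y = M - L_b(U) - R_{b+n\gamma-1}(U)$ and $Y' = M - L_b(U') - R_{b+n\gamma-1}(U')$. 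Hence by prefix-monotonicity of these counts, $Y \leq Y'$ follows from $U(b) \geq U'(b)$ and $U(b+n\gamma-1) \geq U'(b+n\gamma-1)$, which I would deduce by showing that $U$ is itself a stable odometer for $\sigma'$ and applying the least action principle to $\sigma'$.

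The principal technical obstacle is precisely this domination step, since the particle-count calculation ensures the total counts at each site match in the two processes, reducing the question to whether the final states could differ: could a particle be active at the end of the $\sigma'$-process while its counterpart sleeps in the $\sigma$-process? The hypothesis $U(x)>0$ on every site with a particle in $\sigma$ is critical here, as it forces every sleeping particle of $\sigma$ on $I_{b,\gamma}$ to be awakened during the run; together with a site-wise induction tracking the last $\Sleep$ instruction that actually takes effect, I expect this to rule out any state discrepancy, so that $U$'s stability for $\sigma$ transfers to $\sigma'$. Once $Y \leq Y'$ is established, combining with the exponential bound on $Y'$ and absorbing the factor $\gamma$ into the exponential rate yields the corollary.
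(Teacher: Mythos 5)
The paper does not actually supply a proof of this corollary from Theorem~\ref{peanut butter}; it simply cites equation~(81) from the proof of Theorem~8.4 in \cite{HoffmanJohnsonJunge24} (plus a union bound). Your attempt at a genuine reduction is therefore taking a different route, and the comparison idea—activating the sleeping particles of $\sigma$ on $I_{b,\gamma}$ and comparing odometers—is a natural one. However, the domination step, which you yourself flag as ``the principal technical obstacle,'' has a real gap that I do not think your sketch resolves.

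The issue is that the least-action comparison you invoke requires $U$ to be an odometer that actually arises from a (legal or at least controllable) toppling sequence. But the $U$ in Corollary~\ref{daffodil} is \emph{not} the stabilizing odometer of $\sigma$ on $I_{b,\gamma}$: if it were, $U(b)$ would be uniquely determined by $\sigma$ and the stacks, and the free parameter $u_0$ — which Lemma~\ref{sonic} union-bounds over roughly $k^{3.5(n+1)}$ values — would be meaningless. The $U$ here is rather the restriction of the $(n+1)$-step stabilization odometer of $I_{n+1}$ to the subinterval, and the positivity constraint $U(x)>0$ at every particle site is \emph{imposed}, not automatic. In that setting $U$ may fail to be reachable by a legal toppling of $\sigma$ restricted to $I_{b,\gamma}$ with frozen boundary, and then your ``$U$ is stable for $\sigma$, hence stable for $\sigma'$, hence $U\geq U'$ by least action'' chain breaks at both links: the transfer of stability relies on a legal toppling sequence realizing $U$ for $\sigma$, and the least-action principle as stated in the paper applies to legal stabilizing sequences. (A discrepancy-set argument can indeed show that a \emph{legal} sequence for $\sigma$ with $U(x)>0$ at the sleeping sites is also legal for $\sigma'$ and yields the same final configuration — the discrepancies $\s$-vs-$1$ can only be resolved, never created, and positivity forces each one to be resolved — but that argument presupposes legality, which is exactly what the $u_0$ freedom denies you.) Moreover, your particle-count identity $Y = M - L_b(U) - R_{b+n\gamma-1}(U)$ tacitly assumes no inflow into $I_{b,\gamma}$, which is again inconsistent with $U$ being cut out of a larger stabilization. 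So as written the reduction to Theorem~\ref{peanut butter} is not justified; the cited equation~(81) is doing genuine work that is not recovered by activating the sleepers and invoking least action.
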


\begin{proof}
This is a restatement of
 equation (81) in the proof of Theorem 8.4 in \cite{HoffmanJohnsonJunge24} (plus an application of the union bound).    
\end{proof}

\subsection{Transition Probabilities}

Let $G_n$ denote the set of all good configurations on the interval $I_n$ at step $n$. We define the event
\begin{align*}
D_{n+1} &= \text{the resulting configuration on } I_{n+1} \text{ is not good or the odometer is} \\
        &\quad \text{somewhere equal to zero}.
\end{align*}

Our goal is to analyze the probability of the event $D_{n+1}$ conditioned on  configurations $\omegatilde \in G_n$.
With a slight abuse of notation for $\omegatilde \in G_n$ we write 
$\bP_{\omegatilde}$ to indicate the randomness of the distribution of particles in $I_{n+1}$ at the end of the stabilization of $I_{n+1}$.  To achieve this, we partition $D_{n+1}$ into four distinct cases, denoted $D_{n+1}^{1}$, $D_{n+1}^{2}$, $D_{n+1}^{3}$, and $D_{n+1}^{4}$, based on the following considerations:

\begin{enumerate}
    \item \textbf{Odometer Considerations:}
    \begin{itemize}
\item     \textbf{Case $D_{n+1}^{1}$}: The sum of the odometers in $I_{n+1}$  exceeds $k^{3.5(n+1)}$.
       \item \textbf{Case $D_{n+1}^{2}$}: The sum of the odometer does not exceed $k^{3.5(n+1)}$, and the odometer is somewhere equal to zero.
    \end{itemize}
    
    \item \textbf{Criterion Considerations:} Identify which of the two criteria the configuration fails to satisfy.
    \begin{itemize}
        \item \textbf{Case $D_{n+1}^{3}$}: The sum of the odometer does not exceed $k^{3.5(n+1)}$, and the configuration fails \ref{c1}. 
        \item \textbf{Case $D_{n+1}^{4}$}: The sum of the odometer does not exceed $k^{3.5(n+1)}$, the odometer is everywhere positive, and the configuration fails \ref{c2}.
         \end{itemize}
\end{enumerate}

This decomposition allows us to systematically analyze and bound the probability of $D_{n+1}$ by considering each of these four cases separately.

\begin{lemma}\label{saturday}
    $$D_{n+1} \subset \bigcup_{i=1}^4 D_{n+1}^i.$$
\end{lemma}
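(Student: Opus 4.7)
The plan is to prove this containment by exhaustive case analysis, since the four sub-events are defined precisely to cover every way an outcome in $D_{n+1}$ can arise. Fix an outcome in $D_{n+1}$. By definition, either the configuration on $I_{n+1}$ produced by the stabilization step fails to be good, or the odometer $U$ has a zero somewhere in $I_{n+1}$.

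First I would split on the magnitude of the total odometer. If $\sum_{x \in I_{n+1}} U(x) > k^{3.5(n+1)}$, the outcome lies in $D_{n+1}^1$ and we are done. So from here on we may assume $\sum_x U(x) \leq k^{3.5(n+1)}$.

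Next I would split on whether $U$ vanishes somewhere in $I_{n+1}$. If $U(x) = 0$ for some $x$, then together with the odometer bound we are in $D_{n+1}^2$. Otherwise $U$ is strictly positive throughout $I_{n+1}$, and since the outcome is in $D_{n+1}$ but the ``odometer somewhere zero'' alternative is now ruled out, the resulting configuration must fail to be good, i.e.\ it violates \ref{c1} or \ref{c2}. A failure of \ref{c1} places the outcome in $D_{n+1}^3$ (note that $D_{n+1}^3$ does not require the odometer to be everywhere positive, so this branch is automatic), while a failure of \ref{c2}, combined with the odometer positivity and the bound on its sum, places the outcome in $D_{n+1}^4$. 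This exhausts all cases.

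There is essentially no technical obstacle here: the lemma is pure bookkeeping verifying that the four sub-events were defined to partition (or rather cover) the bad event $D_{n+1}$. The only point that requires the slightest care is to notice that the failure of \ref{c1} can occur whether or not the odometer is positive everywhere, which is why $D_{n+1}^3$ is defined without the positivity constraint while $D_{n+1}^4$ includes it; this guarantees that the ``odometer somewhere zero'' outcomes that also fail \ref{c1} are jointly handled by $D_{n+1}^2$ and $D_{n+1}^3$, with no gap.
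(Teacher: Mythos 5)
Your proof is correct and follows essentially the same case analysis as the paper: first split on whether the total odometer exceeds $k^{3.5(n+1)}$, then on whether the odometer vanishes somewhere, and finally on which of the two criteria fails. The extra remark about why $D_{n+1}^3$ omits the positivity requirement is accurate but not needed, since the paper's decomposition already handles that branch the same way.
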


\begin{proof}
Consider the case where the sum of the odometer readings in $I_{n+1}$ exceeds $k^{3.5(n+1)}$. This scenario is captured by $D_{n+1}^1$. Suppose, instead, that the sum does not exceed $k^{3.5(n+1)}$. In this situation, either the odometer is zero at some location in $I_{n+1}$, leading to $D_{n+1}^2$, or the odometer is positive everywhere but the configuration is not good. In the latter case, the configuration must fail either condition \ref{c1}, which implies $D_{n+1}^3$, or condition \ref{c2}, which implies $D_{n+1}^4$.
\end{proof}

\begin{lemma}\label{sunday}
For positive constants $C$ and $c$ depending on $\lambda$, we have
$$
\sup_{\omegatilde \in G_n} \bP_{\omegatilde}\left(D_{n+1}^1\right) \leq C e^{-c k^{0.5 (n+1) }}.
$$
\end{lemma}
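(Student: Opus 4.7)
The plan is to work in the particle-wise representation of ARW. By the abelian property and the equivalence of odometer distributions between the two representations, the site-wise odometer sum $T = \sum_{x \in I_{n+1}} U(x)$ over the $(n+1)$st stabilization step has the same distribution as $\sum_p V(p)$, where $V(p)$ denotes the number of instructions drawn from particle $p$'s i.i.d.\ stack during that stabilization, summed over all particles $p$ present in $I_{n+1}$ at the start of step $n+1$. It therefore suffices to bound $\sum_p V(p)$.

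For each such $p$, let $V^*(p)$ denote the number of instructions in $p$'s stack needed until the simple symmetric random walk obtained by reading only its Left/Right entries, started at $p$'s initial position, first reaches $\pm k^{n+1}$. The key comparison is that $V(p) \leq V^*(p)$: since Sleep instructions do not translate $p$, the position of $p$ after any executed prefix of its stack equals the corresponding position of this underlying walk; once the walk first reaches $\pm k^{n+1}$, the toppling procedure freezes $p$ and permanently halts instruction draws, while intervening sleeping phases merely pause execution without advancing the stack pointer. Because distinct particles have independent stacks, the random variables $\{V^*(p)\}_p$ are independent once starting positions are fixed.

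Standard random-walk exit-time estimates give $\bP(M_p \geq t) \leq C e^{-ct/k^{2(n+1)}}$ for $t \geq C' k^{2(n+1)}$, where $M_p$ is the exit time of the underlying walk from $[-k^{n+1},k^{n+1}]$; conditioning on $M_p$ and using that each stack entry is Left/Right with probability $1/(1+\lambda)$ independently, one obtains $\bP(V^*(p) \geq t) \leq C e^{-ct/k^{2(n+1)}}$ for $t$ above a constant multiple of $k^{2(n+1)}$. In particular, for any constant $c_0 > 0$, $\bP(V^*(p) \geq c_0\, k^{2.5(n+1)}) \leq C e^{-c k^{0.5(n+1)}}$ uniformly in the starting position. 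Moreover, since $\init$ places at most one particle per site and step $n$'s stabilization conserves particles on $I_n$, the total number of particles in $I_{n+1}$ is deterministically bounded by $|I_{n+1}| \leq 3 k^{n+1}$.

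A simple union bound then finishes the estimate. If $\sum_p V^*(p) \geq k^{3.5(n+1)}$ and there are at most $3 k^{n+1}$ particles, then at least one $V^*(p)$ must exceed $k^{2.5(n+1)}/3$; by the previous tail bound together with a union bound over particles, this happens with probability at most $3 k^{n+1} \cdot C e^{-c k^{0.5(n+1)}} \leq C' e^{-c' k^{0.5(n+1)}}$, after absorbing the polynomial prefactor into the exponential. The only real subtlety is justifying $V(p) \leq V^*(p)$ cleanly despite possible sleep-and-wake cycles for $p$; this rests on the monotonicity of each particle's stack pointer and the fact that freezing at the boundary permanently terminates instruction draws. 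Notably, this estimate does not use the ARW-specific input of Theorem \ref{peanut butter} or Corollary \ref{daffodil}; it relies only on random-walk exit-time bounds and the independence of particle stacks.
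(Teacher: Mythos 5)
Your proposal is correct and follows essentially the same route as the paper's proof: pass to the particle-wise representation, use the deterministic bound of at most $3k^{n+1}$ particles to reduce (by pigeonhole) to the event that some single particle draws at least $\tfrac{1}{3}k^{2.5(n+1)}$ instructions, bound that by the exit-time tail of the underlying simple random walk from $[-k^{n+1},k^{n+1}]$ (which is exponentially small in $k^{0.5(n+1)}$), and conclude with a union bound over particles. The only difference is presentational: you spell out more carefully the domination $V(p)\leq V^*(p)$ and the conversion between ``number of stack entries drawn'' and ``number of Left/Right steps,'' which the paper states more tersely via ``each time a particle takes $k^{2n+2}$ steps it exits with probability $\geq c'(\lambda)$.''
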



\begin{proof}
We use a particle-wise representation of ARW. There are at most $2 k^{n+1} + 1 \leq 3 k^{n+1}$ particles in $I_{n+1}$. If the sum of the odometers in $I_{n+1}$ exceeds $k^{3.5(n + 1)}$, then there must exist at least one particle that executes at least $(1/3) k^{2.5(n+1)}$ instructions (either steps or sleep instructions). 

Each time a particle takes $k^{2 n + 2}$ steps, it has a probability of at least $c'(\lambda) > 0$ of exiting $I_{n+1}$. Therefore, the probability that a particle takes more than $k^{2.5(n+1)}$ steps without reaching the boundaries of $I_{n+1}$ decays exponentially in $k^{0.5(n + 1)}$.

Applying the union bound over all particles in $I_{n+1}$, we obtain the desired result.
\end{proof}

\begin{lemma}\label{lays}
There exist constants $C$ and $c$ such that
$$
\sup_{\omegatilde \in G_n} \bP_{\omegatilde}\left(D_{n+1}^3\right) \leq C e^{- c k^{n+1}}.
$$
\end{lemma}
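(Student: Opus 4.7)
The plan is to use particle conservation to reduce $D_{n+1}^3$ to a large-deviation bound for a Bernoulli sum. First, during the $(n+1)$th stabilization step, active particles reaching $\pm k^{n+1}$ are frozen there, so no particle leaves $I_{n+1}$. Moreover, the $n$th stabilization step topples only sites inside $I_n$, so the configuration on $I_{n+1}\setminus I_n$ at the beginning of step $n+1$ is still the original i.i.d.\ Bernoulli($\rho$) configuration. Combining these two observations, the number of particles in $I_{n+1}$ at the end of step $n+1$ equals (the count in $I_n$ determined by $\omegatilde$) plus (the count $S$ of Bernoulli particles initially present in $I_{n+1}\setminus I_n$), and the first summand is at least $2k^n(\crist+2\delta)$ because $\omegatilde$ satisfies \ref{c1} at step $n$.

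Since $D_{n+1}^3$ is contained in the event that \ref{c1} fails at step $n+1$, its occurrence forces
$$S \;<\; 2k^{n+1}(\crist+2\delta) - 2k^n(\crist+2\delta) \;=\; 2(\crist+2\delta)(k^{n+1}-k^n).$$
Set $N := 2(k^{n+1}-k^n)$, so that $S$ is a sum of $N$ i.i.d.\ Bernoulli($\rho$) variables (independent of $\omegatilde$, since $\omegatilde$ only constrains $I_n$) with $\bE[S] = \rho N$. The choice $\delta < (\rho-\crist)/3$ yields $\rho - \crist - 2\delta \geq \delta$, so the threshold above is at least $N\delta$ below the mean. Hoeffding's inequality then gives
$$\bP_{\omegatilde}(D_{n+1}^3) \;\leq\; \bP\bigl(\bE[S]-S \geq N\delta\bigr) \;\leq\; \exp(-2\delta^2 N) \;\leq\; \exp(-2\delta^2 k^{n+1}),$$
uniformly in $\omegatilde \in G_n$, where the last inequality uses $N \geq k^{n+1}$ (valid since $k \geq 2$).

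This is the easiest of the four subcases, and I do not foresee any real obstacle. The key observation is that the odometer constraint built into the definition of $D_{n+1}^3$ is redundant for the argument, and failure of Criterion \ref{c1} at step $n+1$ reduces via particle conservation to a standard concentration statement on the annulus $I_{n+1}\setminus I_n$. In particular, neither Theorem \ref{peanut butter} nor Corollary \ref{daffodil} is needed here; those tools will be reserved for the genuinely dynamical cases $D_{n+1}^2$ and $D_{n+1}^4$.
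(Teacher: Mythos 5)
Your proof is correct and follows essentially the same strategy as the paper: particle conservation inside $I_{n+1}$ reduces failure of Criterion~1 at step $n+1$ to a lower-tail large-deviation bound for a Bernoulli sum, handled by Chernoff/Hoeffding. You are slightly more explicit than the paper's proof in separating the deterministic particle count in $I_n$ (guaranteed to be at least $2k^n(\crist+2\delta)$ because $\omegatilde\in G_n$) from the fresh i.i.d.\ Bernoulli count on the annulus $I_{n+1}\setminus I_n$, which is the right way to make the conditioning on $\omegatilde$ rigorous; otherwise the argument is the same.
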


\begin{proof}
Let $X$ be the number of particles in $I_{n+1}$. Initially, each site contains one sleeping particle, following a Bernoulli distribution with mean $\rho$. In our toppling procedure at step $n+1$, we ensure that no particle escapes from $I_{n+1}$. Thus, the expectation of $X$ is $\rho(2k^{n+1} + 1)$.

We consider the event that $X$ is less than $2k^{n+1} (\crist + 2\delta)$. Since $\rho > \crist + 3\delta$, this value is less than the expected value by a fixed proportion of $\delta$. By the Chernoff bound, the probability that $X$ deviates below its mean by a fixed fraction decays exponentially in $k^{n+1}$. 
\end{proof}

\begin{lemma} \label{sonic}
There exist positive constants $C$ and $c$ such that
$$
\sup_{\omegatilde \in G_n} \bP_{\omegatilde}\left(D_{n+1}^4\right) \leq C e^{- c k^{n+1} }.
$$ 
\end{lemma}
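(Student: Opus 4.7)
The plan is to apply Corollary \ref{daffodil} to every subinterval of $I_{n+1} \setminus \{-k^{n+1}, k^{n+1}\}$ of length $\gamma k^{n+1}$ and combine the resulting estimates by a union bound. Fix $\omegatilde \in G_n$. On the event $D_{n+1}^4$, the stabilizing odometer $U$ of the $(n+1)$st step is strictly positive everywhere on $I_{n+1}$, and Criterion \ref{c2} fails at scale $n+1$, so there exists some subinterval $J \subseteq I_{n+1} \setminus \{-k^{n+1}, k^{n+1}\}$ of length $\gamma k^{n+1}$ containing more than $(\crist + \delta)\gamma k^{n+1}$ particles after stabilization.

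I would first verify the hypotheses of Corollary \ref{daffodil} for a fixed candidate subinterval $J = [b, b+\gamma k^{n+1}-1]$. The initial configuration $\omegaprime$ for the $(n+1)$st step carries at least one active particle in $I_{n+1}$, because the toppling procedure only extends from $I_n$ to $I_{n+1}$ when frozen active particles remain at $\partial I_n \subset I_{n+1}$. The restriction $U|_J$ of the stabilizing odometer is stable on $J$ (no unstable sites remain anywhere after the full stabilization of $I_{n+1}$), and on $D_{n+1}^4$ it is strictly positive on every site of $J$, in particular on every site of $J$ that carries a particle of $\omegaprime$. Since there are no active particles in the interior of $I_{n+1}$ at the end of stabilization, every particle counted in $J$ is sleeping, so the failure of Criterion \ref{c2} coincides with a large value of the quantity $Y_J$ bounded by the corollary.

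Applying Corollary \ref{daffodil} with outer length $N := 2k^{n+1}+1$, scale $\gamma' := \gamma k^{n+1}/N < 1$, and density $\rho := \crist + \delta > \crist$, we obtain
$$\bP_{\omegatilde}\!\left[\, Y_{J}(U(b)) \geq (\crist+\delta)\gamma k^{n+1} \,\right] \;\leq\; C\, e^{-c N} \;\leq\; C\, e^{-c' k^{n+1}}$$
for constants $C,c,c'$ depending only on $\lambda$ and $\delta$. A union bound over the at most $2k^{n+1}$ admissible starting points $b$ then yields
$$\bP_{\omegatilde}(D_{n+1}^4) \;\leq\; 2k^{n+1} \cdot C\, e^{-c' k^{n+1}} \;\leq\; C''\, e^{-c'' k^{n+1}},$$
which is the desired bound.

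The main thing to be careful about is checking that the restricted odometer $U|_J$ qualifies as a stable odometer in the sense of Corollary \ref{daffodil}: stability is inherited from the full stabilizing odometer, and positivity on $J$ is built into the definition of $D_{n+1}^4$. The odometer-sum bound $k^{3.5(n+1)}$ that appears in the definition of $D_{n+1}^4$ plays no direct role in this particular estimate — it is used in Lemma \ref{sunday} to rule out the complementary behaviour. Once the hypotheses of the corollary are in place, the quantitative estimate together with the union bound is routine.
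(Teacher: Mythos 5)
Your general strategy — applying Corollary~\ref{daffodil} subinterval by subinterval and union-bounding — matches the paper, but you have a genuine gap in the application of the corollary. You write the estimate as a bound on $\bP_{\omegatilde}\big[Y_J(U(b)) \geq (\crist+\delta)\gamma k^{n+1}\big]$ where $U(b)$ is the \emph{actual (random) odometer value} at $b$. Corollary~\ref{daffodil}, however, gives a bound only for each \emph{fixed} value $u_0$: the quantity $Y_{I_{b,\gamma}}(u_0)$ is a deterministic functional of the instruction stacks once $u_0$ is specified, and the corollary's probability estimate is over the stacks with $u_0$ held fixed. You cannot plug the random $U(b)$ directly into that estimate.

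This is exactly where the odometer-sum cap $k^{3.5(n+1)}$ in the definition of $D_{n+1}^4$ is used — contrary to your remark that it ``plays no direct role in this particular estimate.'' On $D_{n+1}^4$ the total odometer, and hence $U(b)$ for every $b$, is at most $k^{3.5(n+1)}$, so one can union-bound over the at most $k^{3.5(n+1)}$ admissible values of $u_0$ \emph{in addition to} the at most $2k^{n+1}$ choices of $b$. The resulting bound is of the form $2k^{n+1}\cdot k^{3.5(n+1)}\cdot Ce^{-ck^{n+1}}$, and the polynomial prefactor is absorbed by the exponential. Without the cap on the odometer you have no finite set to union over, and the argument as you wrote it does not close. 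The rest of your verification (the existence of an active particle in $I_{n+1}$, stability and positivity of the restricted odometer on $J$, the choice of $N$ and the rescaled $\gamma'$) is fine.
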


\begin{proof}
For $D_{n+1}^{4}$ to occur, there must exist a subinterval of length $\gamma k^{n+1}$ with a particle density exceeding $\crist + \delta$ after the $(n+1)$th stabilization. There are at most $2k^{n+1}$ choices of subinterval and $k^{3.5(n+1)}$ choices of odometer at the leftmost edge of the subinterval. By Corollary \ref{daffodil}, the probability of such an event for each of these choices decays exponentially in $k^{n+1}$. Applying the union bound over all possible subintervals and choices of the odometer we obtain the desired result.
\end{proof}

\section{Completing the Proof}

We will break up the event $D_{n+1}^{2}$ into pieces depending on the location of the leftmost and/or rightmost site with a zero odometer.

\begin{enumerate}
    \item $F_{n+1}^1$:
    {$D^2_{n+1}$ occurs and there is no zero odometer $\geq k^n$,}
    \item $F_{n+1}^2$: $D^2_{n+1}$ occurs and there is no zero odometer $\leq -k^n$ and
    \item $F_{n+1}^3$: 
   \end{enumerate}
In the case that $F_{n+1}^3$ occurs we define $z$ to be the smallest value with positive odometer and $z'$ to be the largest value with positive odometer. Then we can partition 
$$F_{n+1}^3 =\bigcup_{z',z} F_{n+1}^{3,z',z}.$$
\begin{lemma}\label{ipad}
    $$ D^2_{n+1} \subset F_{n+1}^1 \cup F_{n+1}^2 
    \cup \left( \bigcup_{z,z'} F_{n+1}^{3,z,z'} \right). $$
\end{lemma}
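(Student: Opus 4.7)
The plan is a direct set-theoretic case analysis based on where the zero values of the odometer can occur. I would fix an outcome $\omega \in D^2_{n+1}$ and split into cases according to the presence or absence of sites with zero odometer outside the inner interval $[-k^n, k^n]$.

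First, suppose there is no zero odometer at any site $\geq k^n$. Then $\omega \in F^1_{n+1}$ by definition. Otherwise, some site $\geq k^n$ has zero odometer; in this situation I consider whether there is a zero odometer at some site $\leq -k^n$. If not, then $\omega \in F^2_{n+1}$. If yes, then $\omega$ simultaneously has zero odometer sites both $\leq -k^n$ and $\geq k^n$, so $\omega \in F^3_{n+1}$. These three alternatives are exhaustive, giving $D^2_{n+1} \subset F^1_{n+1} \cup F^2_{n+1} \cup F^3_{n+1}$.

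It remains to justify $F^3_{n+1} = \bigcup_{z,z'} F^{3,z,z'}_{n+1}$. On $F^3_{n+1}$, the set of sites in $I_{n+1}$ with positive odometer is nonempty: the initial configuration $\init$ places an active particle at the origin, so $U(0) \geq 1$ after the stabilization step on $I_{n+1}$. This set is also contained in the finite interval $I_{n+1}$, and hence has a smallest element $z$ and largest element $z'$. Thus every outcome in $F^3_{n+1}$ lies in exactly one $F^{3,z,z'}_{n+1}$, which gives the desired partition and concludes the inclusion.

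I do not anticipate any obstacle here: the lemma is essentially a bookkeeping step that organizes $D^2_{n+1}$ into pieces that can later be controlled using Corollary~\ref{daffodil} and the stabilization bounds from the previous section. The only point requiring any care is the well-definedness of $z$ and $z'$, which follows immediately from the presence of the active particle at the origin in $\init$.
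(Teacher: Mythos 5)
Your three-way case split is exactly the paper's argument, and the set-theoretic reasoning is correct. However, the justification you give for the existence of $z$ and $z'$ contains a genuine error: you claim that since $\init$ has an active particle at the origin, $U(0) \geq 1$ ``after the stabilization step on $I_{n+1}$.'' This confuses the cumulative odometer with the stage-$(n+1)$ odometer. The paper explicitly stipulates that ``the odometer'' means the odometer associated with the stabilization step on $I_{n+1}$, which begins from $\omegaprime$ (the output of stage $n$), not from $\init$. At stage $n+1$ the active particles start at $\pm k^n$, not at the origin, and they need not visit the origin at all; indeed Lemma~\ref{hazel} explicitly treats the case where $\positive$ is disconnected, e.g.\ $[z,z_1]\cup[z_2,z']$, in which the origin may well have zero stage-$(n+1)$ odometer. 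The correct reason $\positive \neq \emptyset$ is that stage $n$ ends with frozen active particles at the boundary sites $\pm k^n$ (and if there were none, the process would have fixated at stage $n$ and there would be no stage $n+1$ to analyze); those active particles lie in the interior of $I_{n+1}$ and must topple, so $\pm k^n \in \positive$. With that substitution your argument is fine, but as written the nonemptiness claim rests on a false statement.
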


\begin{proof}
If $D^2_{n+1}$ occurs and $F_{n+1}^1$ and $F_{n+1}^2$ do not occur then $F_{n+1}^3$ must occur. Then there are $z$ and $z'$ such that $F_{n+1}^{3,z,z'}$ occurs.
\end{proof}

To bound the probabilities of these events, recall that for a configuration $\omega$, $\omega(i)$ represents the number of particles at site $i$. Moreover, we define that if there is a sleeping particle at site $i$, then $\omega(i) = 1$. Using this description, our next definition will characterize the environment in $I_{n+1}\setminus I_{n}$ before stabilizing.

\begin{dfn}\label{plentiful}
We say that $\omegaprime \in \{0,1,2,\dots\}^{\{ - k^{n+1}, \dots , k^{n+1} \}}$ is \plentiful\ if, for every $j \in \{1, 2, \dots, (k-1)/\gamma \}$, the following inequalities hold:
$$\sum_{i = k^n+(j-1) \gamma k^n +1}^{k^n+ j \gamma k^n} \omegaprime(i) \geq \gamma k^n (\crist + 2\delta)$$ and $$\sum_{i = k^n+(j-1) \gamma k^n +1}^{k^n+ j \gamma k^n} \omegaprime(-i) \geq \gamma k^n (\crist + 2\delta).$$
\end{dfn}
The next definition describes the configuration that we expect to see after stabilization in the region $\positive$.

\begin{dfn}\label{sparse}
We say that $\omegadouble \in \{0,1,2,\dots\}^{\{-k^{n+1}, \dots, k^{n+1}\}}$ is \sparse\ in  $R \subset  \Z$ if, for every integer $j$ such that $[j+1,j+ \gamma k^n]\subseteq R$, the following inequality holds:
$$\sum_{i = j +1}^{j+\gamma k^n} \omegadouble(i) \leq \gamma k^n (\crist + \delta).$$
\end{dfn}

Next we bound the probabilities that the initial and final configurations are atypical.
\begin{lemma} \label{plenty}
There exist positive constants $C$ and $c$ such that
$$
\sup_{\omegatilde \in G_n} \bP_{\omegatilde} 
\left( \omegaprime \text{ is not \plentiful} \right) \leq C  e^{- c k^n }.
$$
\end{lemma}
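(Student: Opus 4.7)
The plan is to observe that the conditioning on $\omegatilde \in G_n$ is essentially irrelevant for this lemma, since the definition of \plentiful\ only concerns the sub-intervals lying in $I_{n+1} \setminus I_n$. During the $n$th step of the toppling procedure, no topplings occur outside $I_n$, so the restriction of $\omegaprime$ to $I_{n+1} \setminus I_n$ coincides with the original i.i.d. $\Bern(\rho)$ configuration on those sites. In particular, this restriction is independent of $\omegatilde$, and the supremum over $\omegatilde \in G_n$ reduces to a plain probability bound.

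Next, I would fix one sub-interval $J = [k^n + (j-1)\gamma k^n + 1, \, k^n + j\gamma k^n]$ (and analogously on the negative side) and note that $\sum_{i \in J} \omegaprime(i) \sim \Binom(\gamma k^n, \rho)$ with mean $\gamma k^n \rho$. Since we chose $\delta < (\rho - \crist)/3$, we have $\rho - (\crist + 2\delta) \geq \delta$, so the threshold $\gamma k^n (\crist + 2\delta)$ lies below the mean by at least $\gamma k^n \delta$. A Chernoff bound then yields
\[
\bP\!\left( \sum_{i \in J} \omegaprime(i) < \gamma k^n (\crist + 2\delta) \right) \leq e^{-c_0 \gamma k^n}
\]
for some $c_0 = c_0(\delta) > 0$.

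Finally, I would union-bound over the $2(k-1)/\gamma$ sub-intervals appearing in Definition \ref{plentiful}. This count depends only on the fixed parameters $k$ and $\gamma$, not on $n$, so it contributes only a multiplicative constant. Absorbing $\gamma$ and the number of sub-intervals into the constants $C$ and $c$ gives the stated bound $Ce^{-c k^n}$. No step here should be a real obstacle: the key conceptual point is just that the configuration outside $I_n$ has not been disturbed, after which it is a textbook large-deviations estimate for a binomial above its mean minus a linear slack.
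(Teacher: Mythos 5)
Your proof is correct and takes essentially the same approach as the paper: a Chernoff bound for each of the $2(k-1)/\gamma$ subintervals of length $\gamma k^n$, followed by a union bound. The one refinement is that you make explicit why the $\sup$ over $\omegatilde \in G_n$ is harmless — the sites in $I_{n+1}\setminus I_n$ are untouched by the $n$th stabilization step and retain their i.i.d.\ $\Bern(\rho)$ distribution — a point the paper leaves implicit.
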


\begin{proof}
In each interval, the total number of particles is the sum of $\gamma k^n$ independent Bernoulli random variables with mean $\rho$, where $\rho > \crist + 3\delta$. The probability that the total number of particles in an interval is less than $\gamma k^n (\crist + 2\delta)$ corresponds to a deviation below the expected value by at least $\gamma k^n (\rho - \crist - 2\delta) \geq \gamma k^n \delta$. Applying the Chernoff bound, the probability of such a deviation is exponentially small in $k^n$. Since there are $2(k-1)/\gamma$ intervals, the union bound implies that the probability that $\omegaprime$ is not \plentiful\ is $2(k-1)/\gamma$ times an exponentially decreasing term.
\end{proof}

\begin{lemma}\label{spicy}
There exist positive constants $C$ and $c$ such that
$$
\sup_{\omegatilde \in G_n} \bP_{\omegatilde} 
\left( \omegadouble\text{ is not \sparse\ in } \positive\right) \leq C  e^{- c k^n }.
$$
\end{lemma}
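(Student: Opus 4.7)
The plan is to proceed in parallel to Lemma~\ref{sonic}: bound the probability that $\omegadouble$ fails to be sparse in $\positive$ by union-bounding over all candidate subintervals $J = [j+1, j+\gamma k^n] \subseteq \positive$ together with the value $u_0$ of the odometer at the leftmost site of $J$, and apply Corollary~\ref{daffodil} to each pair $(J, u_0)$. Failure of sparseness means there is some such $J$ with $\sum_{i \in J} \omegadouble(i) > \gamma k^n(\crist + \delta)$, and there are at most $2 k^{n+1}$ subintervals to consider.

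For each fixed pair $(J, u_0)$ I would invoke Corollary~\ref{daffodil} with the ambient interval chosen of length $k^n$ and containing $J$ (so that the corollary's length parameter equals $k^n$ and its $\gamma$ matches ours). The hypothesis that $\omegaprime$ has at least one active particle in this ambient interval holds by selecting the interval to contain one of $\pm k^n$, where the step-$n$ stabilization deposited frozen active particles. The condition that $U(x) > 0$ for every site $x \in J$ containing a particle in $\omegaprime$ is automatic, since $J \subseteq \positive$ forces $U(x) > 0$ for every $x \in J$. The corollary then yields a per-pair bound of $C e^{-c k^n}$.

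The main obstacle is controlling the number of $u_0$ values to union-bound over. Naively using Lemma~\ref{sunday} only gives $u_0 \leq k^{3.5(n+1)}$ with probability $1 - e^{-ck^{0.5(n+1)}}$, and this stretched exponential rate is weaker than the target $e^{-ck^n}$. To bypass this I would sharpen the exit-time analysis underlying Lemma~\ref{sunday}: partitioning $k^{3(n+1)}$ steps of a particle into $k^{n+1}$ chunks of length $k^{2(n+1)}$ and using the constant per-chunk exit probability, the chance that any particle takes more than $k^{3(n+1)}$ steps without exiting $I_{n+1}$ decays like $e^{-ck^{n+1}}$, yielding the deterministic bound $u_0 \leq k^{4(n+1)}$ with probability at least $1 - e^{-c k^n}$. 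Combining everything, the total probability is at most
\[ 2 k^{n+1} \cdot k^{4(n+1)} \cdot C e^{-c k^n} + e^{-c k^n} = O(e^{-c' k^n}), \]
since the polynomial factor $k^{5(n+1)} = e^{O(n \log k)}$ is dominated by $e^{c k^n}$ for large $n$, which is valid given the initial choice $k > 200/\delta^2$. A final subtlety is verifying that $U|_J$ qualifies as a ``stable odometer on $J$'' in the sense of Corollary~\ref{daffodil}; this is handled by the abelian property (Lemma~\ref{camellia}), which lets us perform the topplings outside $J$ before those inside $J$, so the local dynamics inside $J$ is exactly the one the corollary analyzes.
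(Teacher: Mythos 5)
The paper's own proof of this lemma is the one-line reference ``This follows by the same argument used in Lemma \ref{sonic},'' and your proposal reconstructs essentially that argument: a union bound over subintervals $J$ and odometer values $u_0$, with each term controlled by Corollary \ref{daffodil}. You also correctly flagged a genuine gap the paper glosses over: unlike $D_{n+1}^4$, the event in Lemma \ref{spicy} carries no built-in constraint $\sum_x U_{n+1}(x)\leq k^{3.5(n+1)}$, so one must separately control the range of $u_0$, and invoking Lemma \ref{sunday} as stated would cap the overall rate at $e^{-ck^{0.5(n+1)}}$, weaker than the asserted $e^{-ck^n}$ once $n\geq 2$. Your sharpened exit-time estimate --- allowing more steps per particle in exchange for a true exponential tail of rate $k^{n+1}$ --- is a clean fix; an equally legitimate alternative is to build the odometer bound into the statement of the lemma, since it is only applied downstream inside $F_{n+1}^i\subset D_{n+1}^2$, which already carries that bound.

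There is, however, a real gap in your application of Corollary \ref{daffodil}. You take the ambient interval to have length $k^n$, contain $J$, and also contain one of $\pm k^n$ (the only sites where $\omegaprime$ has active particles). But $J$ ranges over all of $\positive$, which can extend to $\pm k^{n+1}$. As soon as the left endpoint of $J$ lies more than roughly $(1-\gamma)k^n$ from both $k^n$ and $-k^n$, no interval of length $k^n$ can simultaneously contain $J$ and an active site, so the corollary's hypothesis is not met. The remedy --- implicit in how the paper handles Lemma \ref{sonic} --- is to take the ambient interval to be all of $I_{n+1}$ and replace $\gamma$ by $\gamma'=\gamma k^n/(2k^{n+1}+1)$; since the constants in Corollary \ref{daffodil} depend only on $\lambda$ and $\rho$ and not on the $\gamma$ parameter, this yields a per-pair bound of $Ce^{-ck^{n+1}}$, which is stronger than needed and comfortably absorbs the polynomially many choices of $J$ and $u_0$.
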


\begin{proof}
    This follows by the same argument used in Lemma \ref{sonic}.
\end{proof}

\begin{lemma}\label{paris}
    Let $\omegaprime,\ \omegadouble \in \{0,1,2,\dots\}^{\{-k^{n+1},\dots , k^{n+1}\}}$ be the initial and final configurations of the stabilization in $I_{n+1}$. If
    \begin{enumerate}
         \item there exist $z \leq  {k^n}$ such that $z \notin \positive,$
        \item $\omegaprime$ is \plentiful  \ and
        \item$\omegadouble$ is \sparse\ in $[z,k^{n+1}),$ 
    \end{enumerate}
    then 
       \begin{equation} 
           \sum_{i=z}^{k^{n+1}}i\ (\omegadouble(i)-\omegaprime(i))>(\delta/10)(k^{n+1})^2. \label{ugly} \end{equation}
\end{lemma}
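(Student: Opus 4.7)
The plan is to combine a conservation identity derived from $U(z) = 0$ with a summation-by-parts rewriting of the first-moment change, and then bound the resulting edge-flux sum tile-by-tile using the plentiful and sparse hypotheses. Write $M := k^{n+1}$ throughout.

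The first step is to establish mass conservation in $[z, M]$. Since $z \notin \positive$, the odometer at $z$ stays zero throughout the stabilization of $I_{n+1}$, so no instruction at $z$ is ever executed. But any active particle arriving at $z$ would render $z$ unstable and force at least one toppling there, contradicting $U(z)=0$; hence no active particle ever reaches $z$. Consequently, no Right instruction at $z-1$ and no Left instruction at $z+1$ occurs, no particle crosses the bond $\{z-1,z\}$, and (because particles frozen at $M$ cannot leave) the total mass in $[z, M]$ is preserved: $\sum_{i=z}^{M} \omegaprime(i) = \sum_{i=z}^{M} \omegadouble(i)$. Setting $A(j) := \sum_{i=z}^{j} \omegaprime(i)$ and $B(j) := \sum_{i=z}^{j} \omegadouble(i)$, summation by parts combined with $A(M) = B(M)$ gives
\[\sum_{i=z}^{M} i\bigl(\omegadouble(i) - \omegaprime(i)\bigr) = \sum_{j=z}^{M-1}\bigl(A(j) - B(j)\bigr).\]
Equivalently, $A(j) - B(j)$ is the net rightward flux across the bond $\{j, j+1\}$, and each such crossing contributes $\pm 1$ to the first moment.

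Next, I lower bound $A(j) - B(j)$ using the hypotheses. A tile-by-tile application of plentifulness gives $A(j) \geq (\crist + 2\delta)(j - k^n) - (\crist + 2\delta)\gamma k^n$ for $j \in [k^n + 1, M]$, with an additional contribution $(\crist + 2\delta)(-k^n - z) - O(\gamma k^n)$ from the left plentiful tiles when $z < -k^n$. A tile-by-tile application of sparseness gives $B(j) \leq (\crist + \delta)(j - z + 1) + (\crist + \delta)\gamma k^n$ on all of $[z, M-1]$. For $j \in [z, k^n]$ I use only the trivial bound $A(j) \geq 0$. Subtracting and summing, the dominant contribution comes from $j \in [k^n + 1, M-1]$ and equals $\delta \sum_{j} j \geq \delta M^2/2 - O(M^2/k)$. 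The remaining pieces --- the offset $(\crist + \delta)z - (\crist + 2\delta)k^n$ summed over $O(M)$ values of $j$, the triangle-sum loss of $O(k^{2n})$ from $j \in [z, k^n]$ (or its analogue absorbed into the left-plentiful contribution when $z < -k^n$), and the $\gamma k^n$ partial-tile corrections --- together sum to $O(M^2/k)$. The choices $k > 200/\delta^2$ and $\gamma < \delta/10$ make each such error a small fraction of $\delta M^2$, producing
\[\sum_{j=z}^{M-1}\bigl(A(j) - B(j)\bigr) \geq \frac{\delta M^2}{2} - O\!\left(\frac{M^2}{k}\right) > \frac{\delta}{10} M^2.\]

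The main obstacle will be the uniform treatment of the two regimes $z \geq -k^n$ and $z < -k^n$: when $z$ is very negative, the offset $(\crist + \delta) z$ becomes large in magnitude, but this is precisely the regime in which the left plentiful tiles of $\omegaprime$ in $[z, -k^n - 1]$ provide a compensating positive contribution to $A(j)$, so the two effects cancel to leading order. The parameters $k$ and $\gamma$ chosen at the start of Section~\ref{section_good} are calibrated exactly so that the main $\delta M^2 / 2$ term strictly dominates every lower-order error.
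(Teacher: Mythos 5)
Your proof is correct, and it takes a genuinely different route from the paper's. The paper argues by exhibiting a particle pairing (a transport-style coupling) between $\omegaprime$ and $\omegadouble$: it shows that roughly $(\delta/2)k^{n+1}$ particles must each be matched to a partner at least $(1/3)k^{n+1}$ to the right (because the excess mass created by plentiful-minus-sparse must pile up at the frozen boundary $k^{n+1}$), while only a bounded amount of mass can move left, and then reads off the first-moment change as a sum of signed displacements over pairs. You instead rewrite the first-moment change by summation by parts as the sum of net rightward fluxes $\sum_{j=z}^{M-1}\bigl(A(j)-B(j)\bigr)$ across each bond, and lower-bound each flux term tile-by-tile from the plentiful and sparse hypotheses, so the dominant term $\delta\sum_{j>k^n} j\approx \delta M^2/2$ appears directly. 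Both arguments hinge on the same two ingredients: (i) conservation of mass on $[z,M]$, which you make explicit via the observation that $U(z)=0$ forbids any particle from ever reaching $z$ and hence seals the bond $\{z-1,z\}$, whereas the paper uses it implicitly when it pairs the excess $\omegaprime$-particles with $\omegadouble$-particles stacked at $k^{n+1}$; and (ii) the plentiful/sparse density gap $\delta$. Your route is more systematic and handles the split between the regimes $z\geq-k^n$ and $z<-k^n$ in a uniform bookkeeping framework, whereas the pairing route is arguably more geometric but requires the reader to fill in the construction of the bijection. The parameter choices $k>200/\delta^2$ and $\gamma<\delta/10$ do suffice to absorb all of the lower-order errors you identify, so there is no gap.
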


\begin{proof}
We will construct a pairing between the particles in $\omegaprime$ and $\omegadouble$ that satisfies 
    \begin{enumerate}
       \item at least $(\delta/2)k^{n+1}$ particles in $\omegaprime$ are paired with a particle in $\omegadouble$ that is more than $(1/3)k^{n+1}$ to its right. 
        \item no particle in $\omegadouble$ is paired with a particle in $\omegaprime$ that is more than $(3/\delta)k^n$ to its left,        
        \item at most $(3/\delta)k^n$ particles in $\omegadouble$ is paired with a particle in $\omegaprime$ that is more than $\gamma k^n$ to its left,
\end{enumerate}
These conditions  imply that the sum on the left hand side of \eqref{ugly} is at least
\begin{eqnarray*}
(\delta/2) k^{n+1}(1/3) k^{n+1} -((3/\delta)k^{n})^2-(\gamma k^n)(2k^{n+1})
& \geq & (k^{2n})(k^2\delta/6 -9/\delta^2 -2\gamma k) \\ 
& \geq & (k^{2n})(k^2\delta/10).
\end{eqnarray*}
The last inequality holds for sufficiently large $k$. Thus \eqref{ugly} holds.

Since $\omegaprime$ is \plentiful\ and $\omegadouble$ is \sparse\ in $[z,k^{n+1}]$, in $[-k^n,k^n +(2/\delta)k^n]$ there are more particles in $\omegaprime$ than in $\omegadouble$. Thus we can pair every particle in this region in $\omegadouble$ to a particle in the same region in $\omegaprime.$ This interval has length at most $(3/\delta)k^n$ and it has at most $(3/\delta)k^n$ particles in $\omegadouble.$

In the region from 
$$[k^n +(2/\delta)k^n,k^{n+1}/2]\supset[k^{n+1}/10,k^{n+1}/2]$$ 
there are at least $\gamma k^{n+1}/3$ particles in $\omegaprime$ that are paired with a particle at $k^{n+1}.$ Thus we can create the desired pairing.
\end{proof}

\begin{lemma}
    \label{united}
Let $U_{n+1}(x)$ be the odometer at $x$ in the stabilization 
at stage $n+1.$
Let $\tilt$ be the event that
    \begin{enumerate}
      \item $\sum_{x=-k^{n+1}}^{k^{n+1}} U_{n+1}(x) \leq k^{3.5(n+1)}$ and
      \item there exists $z \not\in \positive$ with
      $$\sum_{j=z}^{k^{n+1}}j(\omegadouble(j)-\omegaprime(j))>(\delta/10)k^{2n+2}.$$
    \end{enumerate} There exists $C$ and $c$
    $$\sup_{\omegatilde \in G_n}\bP_{\omegatilde}(\tilt)\leq C e^{-ck^{.5n}}$$
\end{lemma}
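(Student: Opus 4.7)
The plan is to rewrite the first-moment sum from condition (2) of $\tilt$ as the terminal value of a bounded-differences martingale, bound its tail by Azuma--Hoeffding using the total-odometer bound in condition (1), and finally union bound over the possible values of $z$.

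First, I would establish a deterministic identity. Writing $R(j)$ and $L(j)$ for the number of $\Right$ and $\Left$ instructions executed at site $j$ during the stabilization of $I_{n+1}$, one has the basic flux identity $\omegadouble(j)-\omegaprime(j)=R(j-1)+L(j+1)-R(j)-L(j)$. A summation-by-parts computation then yields, for any interior $z$,
$$\sum_{j=z}^{k^{n+1}} j\bigl(\omegadouble(j)-\omegaprime(j)\bigr) \;=\; zR(z-1)-zL(z)+\sum_{j=z}^{k^{n+1}-1}R(j)-\sum_{j=z+1}^{k^{n+1}}L(j),$$
where the frozen-boundary identities $R(k^{n+1})=L(k^{n+1})=L(k^{n+1}+1)=0$ have been used. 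The hypothesis $U_{n+1}(z)=0$ immediately gives $R(z)=L(z)=0$ and, crucially, also forces $R(z-1)=L(z+1)=0$: any particle that arrived at $z$ from either neighbor would produce an active particle at $z$ and hence force at least one toppling there, contradicting $U_{n+1}(z)=0$. Substituting collapses the identity to
$$\sum_{j=z}^{k^{n+1}} j\bigl(\omegadouble(j)-\omegaprime(j)\bigr) \;=\; \sum_{j=z+1}^{k^{n+1}-1}\bigl(R(j)-L(j)\bigr).$$
The endpoint cases $z=\pm k^{n+1}$ (which also satisfy $z\notin\positive$) reduce to analogous, simpler identities handled by the same argument.

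Next, I would interpret the right-hand side as the terminal value of a martingale. In the site-wise representation, order all topplings of the stabilization in time, and let $T$ denote the total number of topplings (bounded by $M:=k^{3.5(n+1)}$ on condition (1) of $\tilt$). Let $j^\ast(s)$ denote the site toppled at step $s$ and let $X_s\in\{-1,0,+1\}$ be the signed contribution of the revealed instruction to $R(j^\ast(s))-L(j^\ast(s))$. Because the site-wise instruction stacks are i.i.d.\ and independent of the adapted toppling schedule, conditional on the filtration through step $s-1$ and on $j^\ast(s)$ the increment $X_s$ has mean $0$ and $|X_s|\le 1$. Setting $S_t^{(z)}=\sum_{s\le t}X_s\,\mathbf{1}\{j^\ast(s)\in(z,k^{n+1})\}$, the process $(S_t^{(z)})$ is a martingale with $|S_t^{(z)}-S_{t-1}^{(z)}|\le 1$ and $S_T^{(z)}$ equals the right-hand side of the identity above.

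Finally, applying Azuma's inequality to the stopped martingale $S_{T\wedge M}^{(z)}$ gives, for each fixed $z$,
$$\bP_{\omegatilde}\!\left(|S_T^{(z)}|\ge \tfrac{\delta}{10}k^{2n+2},\ T\le M\right) \;\le\; 2\exp\!\left(-\frac{(\delta/10)^2 k^{4n+4}}{2M}\right) \;=\; 2\exp\bigl(-c\,k^{0.5(n+1)}\bigr).$$
A union bound over the at most $2k^{n+1}+1$ candidate values of $z$ contributes only a polynomial-in-$k$ factor, which, since $k>200/\delta^2$ is large, is easily absorbed into the exponential to yield the desired $Ce^{-ck^{0.5n}}$ bound. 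The main technical obstacle is the identity in the first step, where one must combine the summation-by-parts manipulation with the dynamical observation that $U_{n+1}(z)=0$ precludes any particle from ever arriving at $z$; the concentration step is then routine.
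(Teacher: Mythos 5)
Your proof is correct and follows essentially the same strategy as the paper: use the fact that $z\notin\positive$ forces the dynamics on $[z,k^{n+1}]$ to be closed, build a bounded-difference martingale from the instruction stacks whose terminal value is the center-of-mass shift $\sum_{j\ge z}j(\omegadouble(j)-\omegaprime(j))$, bound its length by the total odometer $\le k^{3.5(n+1)}$, and finish with Azuma--Hoeffding. The only cosmetic difference is in how the martingale is introduced: the paper directly tracks the restricted center of mass $\sum_{x=z}^{k^{n+1}}x\,\omega_t(x)$ along a particle-wise ordering, whereas you derive the same object via a flux/summation-by-parts identity in the site-wise picture; the two yield the same $\pm1$-increment process. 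You also make explicit the union bound over $z$ and the boundary cases, which the paper leaves implicit.
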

\begin{proof}
    If $z \notin \positive$ then all of the particles that start to the right of $z$ remain to the right of $z$ throughout the stabilization process. Choose an order of updating the particles and call the configurations $$\omega_0=\omegaprime, \omega_1,\dots, \omega_J =\omegadouble.$$
    Then
    $$X(j)=\sum_{x=z}^{k^{n+1}}x(\omega_{j}(x)-\omega_{j-1}(x))$$
 for $j =1,2 , \dots, J$ is a martingale where the step size is bounded by 1. Also note that $$J\leq \sum_{x=-k^{n+1}}^{k^{n+1}} U_{n+1}(x) \leq k^{3.5(n+1)}. $$
    By the Azuma-Hoeffding inequality we have that 
    $$\bP(\tilt)\leq C e^{-ck^{.5n}}.$$
\end{proof}

\begin{lemma} \label{reacts}
There exists $C$ and $c$ such that
   $$\sup_{\omegatilde \in G_n} \bP_{\omegatilde}   F_{n+1}^{1} \leq C e^{-ck^{n}} \ \ \ \ \text{ and }  \ \ \ \ \sup_{\omegatilde \in G_n} \bP_{\omegatilde}   F_{n+1}^{2} \leq C e^{-ck^{n}}.$$ 
\end{lemma}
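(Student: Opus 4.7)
The plan is to establish the containment
$$F_{n+1}^1 \subseteq \{\omegaprime \text{ is not \plentiful}\}\ \cup\ \{\omegadouble \text{ is not \sparse\ in } \positive\}\ \cup\ \tilt,$$
and then bound $\bP_{\omegatilde}(F_{n+1}^1)$ by a union bound, using Lemma \ref{plenty}, Lemma \ref{spicy}, and Lemma \ref{united} to control the three pieces. The bound on $F_{n+1}^2$ then follows by the same argument applied after reflecting the configuration about the origin (so the roles of $[z,k^{n+1})$ and $(-k^{n+1},z']$ are swapped, and the displacement in \eqref{ugly} is replaced by its mirror image), so I would focus the writeup on $F_{n+1}^1$.

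To prove the containment, I would assume $F_{n+1}^1$ occurs and that $\omegaprime$ is \plentiful\ and $\omegadouble$ is \sparse\ in $\positive$, and then verify both clauses of $\tilt$. Since $F_{n+1}^1 \subseteq D^2_{n+1}$, the total odometer satisfies $\sum_x U_{n+1}(x) \le k^{3.5(n+1)}$, giving clause (1) of $\tilt$. Now let $z$ be the rightmost site with zero odometer; by the defining property of $F_{n+1}^1$, $z < k^n$, and by maximality $(z,k^{n+1}) \subseteq \positive$. Thus $z\leq k^n$ and $z\notin\positive$ (hypothesis (1) of Lemma \ref{paris}), $\omegaprime$ is \plentiful\ (hypothesis (2)), and $\omegadouble$ is \sparse\ on $(z,k^{n+1})$ (essentially hypothesis (3), up to the one-site subtlety addressed below). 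Lemma \ref{paris} then delivers the displacement estimate \eqref{ugly} with this $z$, which is precisely clause (2) of $\tilt$, so $\tilt$ occurs and the containment holds.

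The main obstacle, and really the only delicate point, is that Lemma \ref{paris} formally demands sparseness of $\omegadouble$ on $[z,k^{n+1})$ whereas Lemma \ref{spicy} only provides sparseness on $\positive$, which misses the single site $z$. I would resolve this by observing that $U_{n+1}(z)=0$ forces $\omegadouble(z)=\omegaprime(z)\in\{0,1\}$: no active particle can ever arrive at $z$ (arrival would make $z$ unstable and hence force a toppling, contradicting $U_{n+1}(z)=0$), and the initial value $\omegaprime(z)$ is at most $1$ since $z$ lies either in the stabilized interior of $I_n$ or in the Bernoulli region $I_{n+1}\setminus I_n$ (the boundary site $-k^n$ is excluded because frozen active particles there would force $U_{n+1}(-k^n)\geq 1$). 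Consequently the sum of $\omegadouble$ over any $\gamma k^n$-window overlapping $z$ differs from the same sum over an adjacent window inside $\positive$ by at most $1$, an $O(1)$ discrepancy that is absorbed by the $(\delta/10)k^{2n+2}$ slack in the conclusion of Lemma \ref{paris}. With this cosmetic adjustment, the inclusion above holds and the union bound against Lemmas \ref{plenty}, \ref{spicy}, and \ref{united} yields the stated exponential bound.
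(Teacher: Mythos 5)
Your proof is correct and follows essentially the same route as the paper: show that if $F_{n+1}^1$ occurs with $\omegaprime$ \plentiful\ and $\omegadouble$ \sparse\ on the relevant region then Lemma~\ref{paris} forces $\tilt$, union-bound against Lemmas~\ref{plenty}, \ref{spicy}, and \ref{united}, and use symmetry for $F_{n+1}^2$. The one-site mismatch you flag between $\positive$ and the interval $[z,k^{n+1})$ needed by Lemma~\ref{paris} is a genuine subtlety that the paper's terse proof leaves unaddressed, and your resolution (observing $U_{n+1}(z)=0$ forces $\omegadouble(z)=\omegaprime(z)\le 1$, an $O(1)$ discrepancy absorbed by the slack in \eqref{ugly}) is sound.
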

   
\begin{proof}
Suppose that $F_{n+1}^{1}$ occurs and $\omegaprime$ is $\plentiful$. And let $z$ be the largest site with zero odometer. Then we have that $z$ is less than $k^n$.

 For the first inequality by Lemma \ref{paris} these events only happen if $\omegaprime$ is not $\plentiful$, $\omegadouble$ is not $\sparse$ to the right of $z'$ or \eqref{ugly} holds (the center of mass changes a lot.) The probabilities of these events are bounded by Lemmas \ref{plenty}, \ref{spicy} and \ref{united}.
 
The second inequality follows by symmetry.
\end{proof}

\begin{lemma} \label{hazel}
There exists $C$ and $c$ such that
    $$\sup_{\omegatilde \in G_n} \bP_{\omegatilde} \left( \bigcup_{z,z'} F_{n+1}^{3,z,z'} \right)\leq C e^{-ck^{n+1}}.$$
\end{lemma}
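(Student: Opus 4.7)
The plan is to exploit a mass conservation principle that follows from $F_{n+1}^3$: if there exist zero-odometer sites $b_L \leq -k^n$ and $b_R \geq k^n$, then no particles can cross these sites during the stabilization of $I_{n+1}$. The key structural observation I would establish first is that any particle arriving at $b_L$ from a neighbor would be active (since a $\Left$ or $\Right$ instruction at the neighbor moves a particle as active), and for the system to stabilize it would have to be toppled again, requiring $U(b_L) \geq 1$. Since $U(b_L) = 0$, no such arrival can occur; the symmetric argument applies at $b_R$. Consequently, $|\omegaprime|_{[b_L+1,\,b_R-1]} = |\omegadouble|_{[b_L+1,\,b_R-1]}$.

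Next I would derive matching bounds on the initial and final masses in $[b_L+1,\,b_R-1]$. Using Criterion~\ref{c1} of $\omegatilde \in G_n$, the initial mass on $I_n \subseteq [b_L+1,\, b_R-1]$ is at least $2k^n(\crist + 2\delta)$, and adding the i.i.d.\ Bernoulli$(\rho)$ contribution from the $L = |(I_{n+1}\setminus I_n)\cap [b_L+1,\, b_R-1]|$ extra sites gives an initial-mass lower bound of $2k^n(\crist + 2\delta) + \rho L$ up to standard Chernoff fluctuations. For the final mass, sparseness of $\omegadouble$ on $\positive$ (Lemma~\ref{spicy}), combined with $\omegadouble = \omegaprime$ outside $\positive$ and the fact that $\omegaprime$ itself is sparse with high probability (by Criterion~\ref{c2} and Chernoff on the Bernoulli sites), yields the upper bound $(\crist + \delta)(b_R - b_L - 1)$ up to negligible edge corrections.

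Equating initial and final masses via the conservation identity then forces the Bernoulli sum on the $L$ extra sites to deviate below its mean by at least $2k^n\delta + 2\delta L$. A Chernoff bound yields probability at most $\exp(-c(2k^n\delta + 2\delta L)^2/L) \leq Ce^{-ck^{n+1}}$; in the extreme regime where $L$ is very small, the sparseness bound combined with Criterion~\ref{c1} produces an outright deterministic contradiction, ruling out the event. Taking a union bound over the $O(k^{2(n+1)})$ choices of $(b_L, b_R)$ and combining with the exponential bounds on $\{\omegaprime\text{ is not } \plentiful\}$ (Lemma~\ref{plenty}) and $\{\omegadouble\text{ is not } \sparse\text{ in } \positive\}$ (Lemma~\ref{spicy}) then completes the proof.

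The main obstacle will be making the final-mass upper bound fully rigorous, because Lemma~\ref{spicy} only guarantees sparseness on sub-intervals entirely contained in $\positive$. I would handle this by splitting each sub-interval of $[b_L+1,\, b_R-1]$ into its $\positive$ part (bounded via Lemma~\ref{spicy}) and its non-$\positive$ part (where $\omegadouble = \omegaprime$ is bounded directly by Chernoff on the i.i.d.\ Bernoullis and by Criterion~\ref{c2} inside $I_n$), absorbing the error in an additional union bound over sub-intervals.
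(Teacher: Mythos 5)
Your overall strategy matches the paper's: exploit conservation of mass between the zero‑odometer sites $z,z'$ (the paper calls them $z,z'$ rather than $b_L,b_R$), lower‑bound the initial mass via \ref{c1} and the \plentiful\ event, upper‑bound the final mass via the \sparse\ event, derive a contradiction, and union‑bound over the endpoints, drawing the actual probability from Lemmas~\ref{plenty} and~\ref{spicy}. That core argument is exactly right.

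However, there is a genuine error in your final‑mass upper bound. You claim that on the sites outside $\positive$ where $\omegadouble=\omegaprime$, the initial configuration $\omegaprime$ is ``sparse with high probability (by Criterion~2 and Chernoff on the Bernoulli sites).'' \ref{c2} does give sparseness of $\omegaprime$ \emph{inside} $I_n$, but on the Bernoulli sites in $I_{n+1}\setminus I_n$ the density is $\rho>\crist+3\delta>\crist+\delta$, so Chernoff gives precisely the opposite of sparseness: those sites are \emph{above} the sparse threshold with high probability. As written, the final‑mass upper bound $(\crist+\delta)(b_R-b_L-1)$ fails on any non‑$\positive$ Bernoulli stretch. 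The correct way to handle the non‑$\positive$ region is not to bound its mass at all but to \emph{cancel} it: since $\omegadouble=\omegaprime$ there, the contribution from $[b_L+1,b_R-1]\setminus\positive$ is identical on both sides of the conservation identity, so the comparison reduces to the initial versus final mass \emph{on $\positive$}. The paper organizes exactly this cancellation by handling the case $\positive=[z,z']$ (where the sparse and plentiful bounds compare directly) and the case $\positive=[z,z_1]\cup[z_2,z']$ (where a ``very similar computation'' applies to the components of $\positive$, with the gap $(z_1,z_2)$ dropping out because the configuration there is unchanged). One further minor remark: the extra Chernoff computation you propose on the ``$L$ extra sites'' is redundant once you invoke the \plentiful\ event, since Lemma~\ref{plenty} already packages that Chernoff bound; the paper obtains a purely deterministic contradiction between \plentiful\ and \sparse, so no additional deviation estimate is needed.
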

   \begin{proof}
First we deal with the case that the set of positive odometers is connected.
As $\omegatilde \in G_n$ if $\omegaprime$ is $\plentiful$ then for every $z<k^n$ and $z'>k^n$ there are at least 
$$(z'-z)(\crist +2\delta)-2\gamma k^n$$
particles in $[z,z']$.
If $F_{n+1}^{3,z,z'}$ occurs then all of these particles are in $[z,z']$ after stabilization.
If $\omegadouble$ is $\sparse$ in $(z,z')$ then there are at most 
\begin{eqnarray*}
(z'-z)(\crist +\delta)+2\gamma k^n
& = & (z'-z)(\crist +2\delta)-\delta(z'-z)+2\gamma k^n \\ 
&\leq & (z'-z)(\crist +2\delta)-2\gamma k^n-\delta(2k^n)
+4\gamma k^n \\ 
& <&(z'-z)(\crist +2\delta)-2\gamma k^n
\end{eqnarray*}
particles in $[z,z']$.
Thus $F_{n+1}^{3,z,z'}$ only happens if $\omegaprime$ is not $\plentiful$ or $\omegadouble$ is not sparse in $[z,z']$. 

If the set of positive odometers is not connected then the set $\positive$ is of the form $[z,z_1] \cup [z_2,z']$ for some $z_1$ and $z_2$. A very similar computation to the one above again shows that 
either $\omegaprime$ is not $\plentiful$ or $\omegadouble$ is not sparse in $\positive$.
Combining Lemmas \ref{plenty} and \ref{spicy} and summing up over all choices of $z,z',z_1$ and $z_2$ completes the proof.
   \end{proof}

\begin{lemma}\label{hichew}
There exist $C$ and $c$ such that
$$
\sup_{\omegatilde \in G_n} \bP_{\omegatilde}\left(D_{n+1}^2\right) \leq C e^{-c k^{n^{0.5}}}.
$$
\end{lemma}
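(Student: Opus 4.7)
The proof plan is a direct application of the union bound to the decomposition already established in Lemma \ref{ipad}, combined with the probability bounds proved in Lemmas \ref{reacts} and \ref{hazel}. All of the substantive work has already been carried out in the preceding lemmas, so no new probabilistic argument is required here.

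Concretely, I would start by invoking Lemma \ref{ipad} to write
$$D_{n+1}^2 \subset F_{n+1}^1 \cup F_{n+1}^2 \cup \bigcup_{z,z'} F_{n+1}^{3,z,z'}.$$
A union bound then gives
$$\bP_{\omegatilde}\bigl(D_{n+1}^2\bigr) \leq \bP_{\omegatilde}\bigl(F_{n+1}^1\bigr) + \bP_{\omegatilde}\bigl(F_{n+1}^2\bigr) + \bP_{\omegatilde}\left(\bigcup_{z,z'} F_{n+1}^{3,z,z'}\right).$$
By Lemma \ref{reacts}, each of the first two terms is at most $Ce^{-ck^n}$, and by Lemma \ref{hazel}, the third is at most $Ce^{-ck^{n+1}}$. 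Taking the sum and absorbing polynomial prefactors and the finitely many constants into a single exponential bound of the slowest-decaying form yields a bound of the shape $C'e^{-c'k^n}$, which in particular dominates the claimed $Ce^{-ck^{n^{0.5}}}$ for all sufficiently large $k$ and $n$.

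The only mild care needed is in tracking the constants: the bounds from Lemmas \ref{reacts} and \ref{hazel} both have the form $Ce^{-ck^m}$ for $m \in \{n, n+1\}$, and the union over $(z,z')$ in Lemma \ref{hazel} has already been absorbed there, so no additional polynomial factor in $k^{n+1}$ enters at this stage. I would simply pick new constants $C', c' > 0$ (depending only on $\lambda$ and $\delta$, through the constants in the earlier lemmas) so that
$$2Ce^{-ck^n} + Ce^{-ck^{n+1}} \leq C'e^{-c'k^{n^{0.5}}},$$
which is trivial since $k^n \geq k^{n^{0.5}}$ for $n \geq 1$ and $k \geq 1$.

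The main obstacle, if any, lies not in this lemma but in verifying that the probability bounds cited from Lemmas \ref{reacts} and \ref{hazel} really are uniform over $\omegatilde \in G_n$ and not just pointwise; but this is already stated in the suprema appearing in those lemmas. Hence Lemma \ref{hichew} follows immediately.
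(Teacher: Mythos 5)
Your proof is correct and is essentially identical to the paper's own (which simply cites Lemmas \ref{ipad}, \ref{reacts}, and \ref{hazel} together with the union bound). You merely spell out the routine bookkeeping of combining the exponential bounds, which is exactly the intended argument.
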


\begin{proof}
By Lemmas \ref{ipad}, \ref{reacts}, and \ref{hazel} and the union bound.

\end{proof}

\begin{lemma}\label{trumpet}
There exist positive constants $C$ and $c$ such that
$$
\sup_{\omegatilde \in G_n} \bP_{\omegatilde}\left(D_{n+1}\right) \leq C e^{-c k^{n^{0.5}}}.
$$ 
\end{lemma}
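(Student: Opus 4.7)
The plan is straightforward: this lemma is the culmination of all the case analysis performed in the preceding sections, so the proof is essentially an exercise in applying Lemma \ref{saturday} together with the four bounds we have already established for the individual pieces of $D_{n+1}$.

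First I would invoke Lemma \ref{saturday} to write
\[
D_{n+1} \subset D_{n+1}^1 \cup D_{n+1}^2 \cup D_{n+1}^3 \cup D_{n+1}^4.
\]
By the union bound, it then suffices to bound each $\bP_{\omegatilde}(D_{n+1}^i)$ uniformly over $\omegatilde \in G_n$ and retain the weakest of the four estimates. Lemma \ref{sunday} handles the atypically large odometer case, giving a bound of order $e^{-ck^{0.5(n+1)}}$. Lemma \ref{lays} handles the failure of \ref{c1} (too few particles in $I_{n+1}$) via a Chernoff estimate, giving $e^{-ck^{n+1}}$. Lemma \ref{sonic} handles the failure of \ref{c2} (an atypically dense subinterval after stabilization), again giving $e^{-ck^{n+1}}$. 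Finally, Lemma \ref{hichew} handles the case where the odometer vanishes somewhere while its total stays subpolynomial, yielding the weakest bound $e^{-ck^{n^{0.5}}}$.

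Collecting these four estimates and taking the maximum exponent, the dominant contribution is that from $D_{n+1}^2$, which gives the desired bound $Ce^{-ck^{n^{0.5}}}$ after adjusting the constants $C$ and $c$ to absorb the factor of four from the union bound. There is no real obstacle here, as all the hard work has been carried out in the previous lemmas; the only point requiring a moment of thought is verifying that $k^{n^{0.5}}$ is indeed the slowest decay rate among the four cases, which is immediate since $k^{n^{0.5}} \leq k^{0.5(n+1)} \leq k^{n+1}$ for all sufficiently large $n$ (and the constants can be chosen to handle small $n$ trivially).
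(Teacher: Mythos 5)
Your proof is correct and follows essentially the same route as the paper: decompose via Lemma~\ref{saturday}, apply the four bounds from Lemmas~\ref{sunday}, \ref{hichew}, \ref{lays}, and \ref{sonic}, and take a union bound, with the $D_{n+1}^2$ term dominating. Minor note: the inequality $n^{0.5}\le 0.5(n+1)\le n+1$ in fact holds for every $n\ge 1$ (since $(n-1)^2\ge 0$), so no "sufficiently large $n$'' qualifier is needed.
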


\begin{proof}
By Lemmas \ref{sunday}, \ref{hichew}, \ref{lays}, and \ref{sonic}, the probabilities of the events $D_{n+1}^{1}$, $D_{n+1}^{2}$, $D_{n+1}^{3}$, and $D_{n+1}^{4}$ decay exponentially in $k^{n+1}$ or $k^{0.5 (n+1)}$. Therefore, by Lemma \ref{saturday}
there exist positive constants $C$ and $c$ such that
$$
\sup_{\omegatilde \in G_n} \bP_{\omegatilde}\left(D_{n+1}\right) \leq C e^{-c k^{n^{0.5}}}.
$$
\end{proof}

\begin{proof}[Proof of Upper Bound of Theorem \ref{chipmunk}]
    From Lemma \ref{trumpet} and the Borel-Cantelli lemma we get that the probability that $D_{n}$ does not occur for any $n$ is positive. By the definition of $D_n$ if  $D_n$ does not occur then every particle in $I_n$ moves at least once in the $n$th stabilization stage. Thus if $D_n$ does not occur for any $n$ then every particle moves infinitely often. Thus the system does not stabilize.
\end{proof}

\bibliographystyle{amsalpha}
\bibliography{main}

\end{document}